   \newif\ifpdf
\newcommand{\R}{\mathbb{R}}
\newcommand{\N}{\mathbb{N}}
\newcommand{\Z}{\mathbb{Z}}
\newcommand{\C}{\mathbb{C}}
\newcommand{\D}{\mathbb{D}}
\newtheorem{thm}{Theorem}[section]
\newtheorem{prop}[thm]{Proposition}
\newtheorem{coro}[thm]{Corollary}
\newtheorem{defi}[thm]{Definition}
\newtheorem{lem}[thm]{Lemma}
\newtheorem{rem}[thm]{Remark}
\newcommand{\rs}{\mathbb{P}^1}
\newcommand{\limn}{\lim_{n \rightarrow \infty}}
\newcommand{\id}{\mathrm{Id}}
\newcommand{\bcal}{\mathcal{B}}
\newcommand{\lcal}{\mathcal{L}}
\newcommand{\eps}{\epsilon}
\newcommand{\la}{\lambda}
\newcommand{\lam}{\lambda}
\numberwithin{equation}{section}
\title{Horn maps of semi-parabolic
Hénon maps}
\address{Université d’Orléans, Institut Denis
Poisson, UMR CNRS 7013, 45067 Orléans Cedex 2, France}
\email{matthieu.astorg@univ-orleans.fr}
\address{Dipartimento di Matematica, Università di Pisa, Largo Bruno Pontecorvo 5, 56127 Pisa, Italy}
  \email{fabrizio.bianchi$@$unipi.it}
\begin{document}

\maketitle
\selectlanguage{english}

\begin{abstract}
	We prove that horn maps associated to quadratic semi-parabolic fixed points of Hénon maps, first introduced by Bedford, Smillie, and Ueda,
 satisfy a weak form of the Ahlfors island property. As a consequence,
 two natural
 definitions of
 their Julia set (the non-normality locus of the family of iterates and
 the closure of the set of the repelling periodic points) coincide.
    As another consequence, we also prove that there exist small perturbations of semi-parabolic Hénon maps for which
    the Hausdorff dimension of the
    forward Julia set $J^+$ is
     arbitrarily close to $4$.
\end{abstract}

\section{Introduction}

Following the seminal work of
Douady-Hubbard \cite{DH84-p1,DH85-p2}
 and
 Lavaurs \cite{lavaurs1989systemes},  
the study of perturbations of maps
with a parabolic fixed point,
often referred to as \emph{parabolic implosion}, has
been a major theme in one-variable complex dynamics. A first consequence of this theory
is the discontinuity of the Julia sets
at parameters where a parabolic bifurcation occurs \cite{Douady94Julia}.
Further
notable applications include  the celebrated result by Shishikura \cite{Shishikura98hausdorff} that
parabolic maps can be approximated by hyperbolic maps
with arbitrarily large 
hyperbolic dimension
and, as a consequence, the proof that the boundary of the Mandelbrot set has maximal Hausdorff dimension, see also
\cite{DNS97, mcmullen, Tan98hausdorff,Zinsmeister98}
for refinements of this result and further consequences
 and
\cite{Shishikura00bifurcation,PV20}
 for an overview of the theory. 

Shishikura's proof involves 
the so-called \emph{horn maps}. These maps describe the limit behaviour
of the return maps of large iterates of the perturbed maps near the parabolic point.
Another important aspect of horn maps
 (also called \emph{\'Ecalle-Voronin invariants})
 is that they classify parabolic germs up to analytic conjugacy, i.e., they form a \emph{complete invariant} for this notion of equivalence \cite{Ecalle85-tome3,Voronin81analytic}.
Techniques from parabolic implosion
have been extended by Inou and Shishikura \cite{inou2006renormalization}, who introduced the \emph{near-parabolic renormalization}, a powerful tool which was used,
for instance,
to construct quadratic Julia sets with positive area \cite{BC12area},
and to make
significant 
steps towards the settling of the Fatou hyperbolicity conjecture
\cite{cheraghi2015satellite}.

In recent years, techniques of parabolic implosion  have started to be developed and  successfully
applied also in higher dimensions. 
In \cite{BSU17semi}, Bedford, Smillie, and Ueda extended Lavaurs' results to diffeomorphisms of $(\C^2,0)$ 
with a semi-parabolic fixed point, that is, with one multiplier equal to $1$ and one in the unit disk. In the important particular case of dissipative Hénon map, the authors introduced 
a horn map analogous to the one-dimensional case, which they used to show the discontinuity of various dynamically meaningful sets
at parameters with a semi-parabolic fixed point.
In \cite{DL15stability}, 
Dujardin and Lyubich 
adapted and improved
the results from \cite{BSU17semi}
to construct homoclinic tangencies in some regions of the parameter space of complex Hénon maps, as part of their
characterization of stability and bifurcation
in families
of such maps.
Parabolic implosion
techniques in two complex 
variables were also used
by Buff, Dujardin, Peter, Raissy, and the first author 
to give the first example of 
an endomorphism of $\mathbb P^2(\mathbb C)$
with a wandering domain \cite{ABDPR16}, which solved a long-standing open question in the domain, see also
\cite{ABP23}.
Adaptations of these techniques from Boc Thaler and  the first author have lead to a precise description of the local dynamics near a parabolic point of a significant class of maps tangent to the identity in $\mathbb C^2$
\cite{abate2001residual},
in particular solving a long-standing open question by Abate \cite{abate2005classification}
on the topological classification of such maps. 
The first result on the parabolic implosion of a two-dimensional map tangent to the identity was also established by the second author in \cite{bianchi19parabolic}.

\medskip

Coming back to the original work 
by Bedford, Smillie, and Ueda, 
very little is known about the dynamics of the horn maps of semi-parabolic Hénon maps; 
for instance, 
it was not even known until now whether they always had periodic 
cycles 
(besides the two "trivial" fixed points $0$ and $\infty$).
In this paper, 
we prove that they satisfy a weak version of the so-called Ahlfors island property. As a consequence, we can show the
density of the repelling periodic points in  their Julia sets, and an analogous
of Shishikura's result
for the forward Julia set $J^+$ of dissipative Hénon maps.

\medskip

The class of \emph{Ahlfors island maps}
was implicitly present in the work of Epstein \cite{epstein1993towers}, see also \cite{lasserippon,lassevolker}. Roughly speaking,
given an open set $W\subset \rs$, a holomorphic map $f:W \to \rs$  has the { $N$ islands} property if,
given any $N$ Jordan domains with pairwise disjoint closures, one can find univalent inverse branches of $f$ on at least one of these Jordan domains, whose image is close to any given point in the boundary of $W$ (see Definition \ref{d:island} for a precise formulation).
A celebrated theorem 
by Ahlfors \cite{Ahlfors1935theorie}
states that every entire or meromorphic map has the 
{ $5$ islands} property (in this case, we have $W=\C$ and the only boundary point is $\infty$), see also
\cite{Bergweiler1998ahlfors}. This remarkable result can be used to give 
a simple proof 
of the density of repelling cycles in the Julia set of any transcendental entire or meromorphic map,
see for instance
\cite{Bergweiler93iteration}.

Another class of maps extensively studied by Epstein \cite{epstein1993towers} 
is the class of 
\emph{finite type maps}, that is, holomorphic maps with finitely many singular values (see Definition \ref{d:finite-type}). 
Finite type maps with $N$ singular values have the 
 $(N+1)$ islands
property; however, finite type maps form a much smaller  class than Ahlfors island maps. For instance, Epstein proved that finite type maps have no wandering or Baker domains 
and admit only finitely many non-repelling cycles; none of these statements is true in general for Ahlfors island maps. While horn maps of one-variable rational maps are always finite type maps
  \cite{epstein1993towers}, 
it seems unlikely that this holds true for horn maps of 
dissipative semi-parabolic Hénon maps in general
(indeed, this would be equivalent to proving that only finitely many stable manifolds have tangencies with a certain entire curve $\Sigma$, defined below).

\medskip

In this paper, 
we introduce the following slightly weaker version of the island property.
Observe that 
in the
usual versions,
the property  below is true for every
positive real number 
instead of just those smaller than
$r(z_0)$, see Definition \ref{d:island}.
On the other hand, here
we can specify which points $z_0$ should be excluded,
see Remark \ref{r:island-small-island}.

\begin{defi}\label{d:small-island}
	Let  $ W \subset \rs$ be an open set
and $h: W \to \rs$ 
a holomorphic map. We say that $h$ has the \emph{small island property}
if,
for every $z_0 \in \C^*$, 
	there exists $r(z_0)>0$ such that, for every domain $U$ intersecting $\partial W$, there exists $\Omega \Subset U \cap W$ such that 
	$h: \Omega \to \D(z_0,r(z_0))$ is a conformal isomorphism. 
\end{defi}

Let us emphasize that $r(z_0)$ does not depend on the choice of $U$, but only on $f$ and $z_0$.
We will show in Theorem \ref{th:islandp}
that the small island property as in Definition \ref{d:small-island} is enough to prove
the density of the repelling periodic points in the Julia set.

\medskip

Let now
$f$ be a dissipative Hénon map with a semi-parabolic fixed point $p$ of order $2$, see Section \ref{s:prelim} 
for the precise definitions.
Let $\mathcal B$ denote the parabolic basin of the semi-parabolic point, which is a two-dimensional open set with 
$p$ on its boundary, and 
$\Sigma$ the parabolic curve,
which can thought of as the one-dimensional unstable curve of $p$.
By 
\cite{BSU17semi}, 
there exist
two maps 
$\phi^\iota\colon {\mathcal B}\to \C$
(usually called the 
\emph{incoming Fatou coordinate}) 
 and
$\psi^o: \C \to \Sigma$
(usually called the 
\emph{outgoing Fatou parametrization})
which semi-conjugate
the map 
$f$ 
on $\mathcal B$ and on $\Sigma$
to the translation by $1$
on $\C$, respectively.
The
\emph{H\'enon-Lavaurs map}
(or \emph{transition map})
associated to $f$
is the composition
$\mathcal L_0:=\psi^o \circ \phi^\iota$. 
Observe that $\mathcal L_0$
commutes with $f$.
By \cite{BSU17semi}, $\mathcal L_0$
can be seen as a limit of large iterates of suitable perturbations of $f$ near the semi-parabolic point.
We refer to Section \ref{ss:prelim-parabolic}
for more details on this.

By the definition of $\mathcal L_0$, 
we immediately see that this map is semi-conjugated to the map
$H_f := \phi^\iota \circ \psi^o\colon (\psi^o)^{-1} (\mathcal B)\to \C$.
This 
is  the \emph{lifted horn map}
associated to $f$.
As 
$\mathcal L_0$ commutes with $f$,
 we see that $H_f$ commutes with the translation by $1$ on $\mathbb C$. We can then quotient its action by this translation, and obtain a map from 
 a subset of the cylinder  $\mathbb C^*$,
  containing pointed neighbourhoods of $0$ and $\infty$,
 to the cylinder itself. By \cite{BSU17semi}, $h_f$
  extends to $0$ and
 $\infty$. The following is our first main result.

\begin{thm}\label{th:islandp}
Let $f$ be a dissipative semi-parabolic Hénon map as above.
	The horn map $h_f$ has the small island property as in Definition \ref{d:small-island}.
\end{thm}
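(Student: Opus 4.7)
The plan is to establish the small island property via a Zalcman-type rescaling of $h_f$ near $\partial W$, combined with an application of Ahlfors' five islands theorem to the non-constant meromorphic limit that this rescaling produces, in the spirit of the arguments surveyed in \cite{Bergweiler1998ahlfors}.

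First I would unpack the boundary structure. Under the quotient from $\mathbb{C}$ to $\mathbb{C}^*$, a point $\zeta_0 \in \partial W$ lifts to some $\tilde{\zeta}_0$ with $\psi^o(\tilde{\zeta}_0) \in \partial \mathcal{B}$, which sits inside $J^+$. The obstruction to extending $h_f$ across $\zeta_0$ is therefore precisely the wild boundary behavior of the incoming Fatou coordinate $\phi^\iota$ as one approaches $\partial \mathcal{B}$ from inside the parabolic basin. Given $U$ meeting $\partial W$, I fix such a $\zeta_0 \in U \cap \partial W$ and a sequence $\zeta_k \in W \cap U$ converging to $\zeta_0$.

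The main analytic tool is the functional equation $\phi^\iota \circ f = \phi^\iota + 1$, which yields
\[
H_f(\tilde{\zeta}) = \phi^\iota\bigl(f^{N}(\psi^o(\tilde{\zeta}))\bigr) - N
\]
on lifts, whenever the iterate stays in $\mathcal{B}$. Choosing $N = N_k \to +\infty$ so that $f^{N_k}(\psi^o(\tilde{\zeta}_k))$ remains in a fixed compact piece of a fundamental domain of $\phi^\iota$ near the semi-parabolic point $p$, one reduces the local behavior of $h_f$ near $\zeta_0$ to the composition of a well-controlled branch of $\phi^\iota$ with the highly non-normal family $\{f^{N_k} \circ \psi^o\}$ around $\tilde{\zeta}_0$, the non-normality being a direct consequence of $\psi^o(\tilde{\zeta}_0) \in J^+$. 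Zalcman's lemma applied to the rescalings $g_k(w) := h_f(\zeta_k + \rho_k w)$ for suitable scales $\rho_k \to 0$ then produces a non-constant meromorphic limit $G : \mathbb{C} \to \mathbb{P}^1$. Ahlfors' five islands theorem applied to $G$ guarantees a simple (univalent) island over at least one of any five pairwise disjoint closed Jordan domains in $\mathbb{P}^1$; taking $\mathbb{D}(z_0, r(z_0))$ together with four auxiliary disks around points of $\mathbb{C}^*$ chosen so that $G$ cannot accommodate univalent islands over all four simultaneously forces the island of $G$ to lie over $\mathbb{D}(z_0, r(z_0))$. Transporting this island back through $g_k$ for $k$ large yields the required $\Omega \Subset U \cap W$.

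The main obstacle is the rescaling step: one must guarantee that the Zalcman limit is genuinely non-constant and that the scale $r(z_0)$ depends only on $z_0$, not on $U$ or on the particular boundary point $\zeta_0$. The exclusion of $z_0 = 0, \infty$ in Definition \ref{d:small-island} reflects exactly the fact that $G$ may accumulate on the trivial fixed points of $h_f$, where the island construction degenerates. Carrying this out rigorously requires fine control on the local geometry of $\partial \mathcal{B} \cap \Sigma$ and on the behavior of $\phi^\iota$ there; unlike the one-variable situation, where the horn map is of finite type and an island statement follows essentially from the finiteness of singular values, here one has to work directly from the transcendental dynamics of $f$ along $\partial \mathcal{B}$, which is where the main technical difficulty lies.
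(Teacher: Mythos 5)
Your plan diverges fundamentally from the paper's argument, and as written it has genuine gaps that do not appear repairable without essentially starting over.

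First, the ``four auxiliary disks'' step is logically flawed. Ahlfors' five islands theorem applied to the Zalcman limit $G$ gives you a univalent island over \emph{at least one} of five Jordan domains with disjoint closures, but it gives no control over \emph{which} one. To conclude that the island lies over $\D(z_0, r(z_0))$ specifically, you would need to rule out islands over \emph{each} of the other four domains; knowing merely that $G$ ``cannot accommodate islands over all four simultaneously'' is much weaker (if, say, three of the four carry islands, Ahlfors is satisfied and your target disk gets nothing). There is no mechanism in your sketch for excluding islands over auxiliary disks chosen essentially arbitrarily in $\C^*$, and for a generic meromorphic Zalcman limit $G$ such islands will in fact be plentiful. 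Second, and this is the obstacle you yourself flag, the Zalcman limit $G$ and the rescaling parameters $\rho_k$ depend on the chosen $U$, on $\zeta_0$, and on the approaching sequence $\zeta_k$. So any radius you can extract this way is a priori a function of $U$, which breaks the uniformity ``$r(z_0)$ depends only on $z_0$'' that Definition \ref{d:small-island} requires and that is essential for Theorem \ref{th:repdense}. You correctly identify this as the main obstacle, but you do not resolve it, and I do not see how a Zalcman/Ahlfors argument could resolve it: those tools are local at $\zeta_0$, while the required radius must be global.

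The paper's proof is of an entirely different nature and is what supplies the missing uniformity. Rather than rescaling $h_f$ near $\partial W$ and invoking value-distribution theory, the paper works with the two-dimensional Hénon dynamics directly. It fixes once and for all a Pesin box $P$ for the equilibrium measure $\mu$, then shows (Lemmas \ref{lem:conv-vertical-T+}--\ref{lem:intersectionP-D}, via equidistribution of preimages towards $T^+$ and of forward images towards $T^-$, in the horizontal-like semi-local setting) that (a) preimages of the slice $S(z_0)=\{\phi^\iota = z_0\}$ intersect unstable manifolds through $P$ transversally, yielding a radius $r(z_0)$ determined solely by $z_0$ and the fixed box (Lemma \ref{def:r0}); and (b) forward images of \emph{any} disk $D\subset\Sigma$ meeting $\partial\mathcal{B}$ intersect stable manifolds through $P$ transversally. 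The inclination lemma (Lemma \ref{lem:inclination}) then matches these two families of transversals inside the fixed Pesin box and produces a univalent branch of $H_f$ from a piece of $(\psi^o)^{-1}(D)$ onto $\D(z_0,r(z_0))$. The uniformity in $U$ comes for free because the target of both convergence lemmas is the same fixed $P$, independent of $D$. Your Zalcman/Ahlfors strategy has no analogue of this fixed, $U$-independent anchor, which is why the required uniformity cannot be recovered along that route.
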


The following is then a
consequence of Theorem
\ref{th:islandp}.

\begin{coro}
Let $f$ be a dissipative semi-parabolic Hénon map as above.
	The repelling periodic points of the horn map $h_f$ are dense in its Julia set.
\end{coro}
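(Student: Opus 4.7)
The strategy is the classical island-map scheme due to Ahlfors, Bergweiler and others (see e.g.\ \cite{Bergweiler1998ahlfors,Bergweiler93iteration}): one combines the small island property of Theorem~\ref{th:islandp} with the Schwarz-lemma fact that a conformal isomorphism mapping a subdomain onto a strictly larger disk admits a unique repelling fixed point. Fix $z\in J(h_f)$ and a neighbourhood $V$ of $z$; the goal is to produce a repelling periodic point of $h_f$ inside $V$. I may assume $z\in\C^*$, since $J(h_f)$ cannot reduce to a subset of $\{0,\infty\}$ and points of $J(h_f)\cap\C^*$ accumulate on any such potential exceptional point. After shrinking $V$ if necessary, I may further arrange that $V\Subset\D(z,r(z))$, with $r(z)>0$ supplied by the small island property at $z_0:=z$.

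The first step is to find an integer $N\ge 0$ with $h_f^N(V)\cap\partial W\neq\emptyset$. Otherwise every iterate $h_f^n$ is holomorphic on $V$ with image in $W\subset\C^*$, so the family $\{h_f^n\}_{n\ge 0}$ avoids the set $\{0,\infty\}\cup\partial W$; this contains at least three values of $\widehat\C$, and Montel's theorem would then force normality on $V$, contradicting $z\in J(h_f)$. With such an $N$ fixed, applying Theorem~\ref{th:islandp} to $U:=h_f^N(V)$ and $z_0:=z$ produces a subdomain $\Omega\Subset h_f^N(V)\cap W$ such that $h_f\colon\Omega\to\D(z,r(z))$ is a conformal isomorphism. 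I would then pick a point $v\in V$ with $h_f^N(v)\in\Omega$ and $v$ not a critical point of $h_f^N$ (both achievable since $\Omega\subset h_f^N(V)$ and the critical set of $h_f^N|_V$ is discrete), and shrink $\Omega$ around $h_f^N(v)$ enough so that the connected component $\Omega'$ of $h_f^{-N}(\Omega)$ containing $v$ is contained in $V$ and is mapped conformally onto $\Omega$ by $h_f^N$. The composite $h_f^{N+1}\colon\Omega'\to\D(z,r(z))$ is then a conformal isomorphism with $\Omega'\subset V\Subset\D(z,r(z))$.

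To conclude, the inverse $(h_f^{N+1})^{-1}\colon\D(z,r(z))\to\Omega'$ is a holomorphic self-map of a hyperbolic disk with relatively compact image, and hence a strict contraction in the Poincaré metric. By the Banach fixed-point theorem it admits a unique fixed point in $\Omega'\subset V$, which is a repelling periodic point of $h_f$. Since both $z$ and $V$ were arbitrary, this yields the desired density. I expect the pull-back construction to be the most delicate step: simultaneously ensuring $\Omega'\subset V$ and the univalence of $h_f^N|_{\Omega'}$ requires careful handling of the critical set of the iterate $h_f^N$ and of the geometry of the preimage components.
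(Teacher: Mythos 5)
Your high-level strategy (force an orbit of $V$ to exit $W$, apply the small island property on the exit set, pull back and produce an expanding map from a subdomain of $V$ to a larger disk, hence a repelling point) is the same as the paper's, but the pull-back step has a genuine gap. After obtaining $\Omega\Subset h_f^N(V)\cap W$ with $h_f\colon\Omega\to\D(z,r(z))$ a conformal isomorphism, you shrink $\Omega$ to a smaller domain around $h_f^N(v)$ so that the component $\Omega'$ of its preimage under $h_f^N$ in $V$ maps univalently. But shrinking destroys the crucial surjectivity: once $\Omega$ is replaced by a strict subdomain, $h_f$ of that subdomain is a \emph{proper} subset of $\D(z,r(z))$ (since $h_f$ is a bijection onto the disk), so $h_f^{N+1}\colon\Omega'\to h_f(\Omega_{\mathrm{shrunk}})$ does not land onto $\D(z,r(z))$, and nothing ensures $\Omega'\Subset h_f(\Omega_{\mathrm{shrunk}})$; the Schwarz-lemma/contraction argument no longer applies. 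Conversely, without shrinking you have no control over the critical values of $h_f^N|_V$ falling inside $\Omega$, so a univalent inverse branch of $h_f^N$ over $\Omega$ landing in $V$ may simply not exist. Your Montel argument has a separate slip: $W$ contains $0$ and $\infty$ (the horn map is extended there), so $W\not\subset\C^*$ and $\{0,\infty\}$ cannot serve as omitted values.

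The paper (Theorem \ref{th:repdense}) avoids both issues. First, rather than Montel, it uses Lemma \ref{lem:boundary} ($J_F=\overline{\bigcup_n h_f^{-n}(\partial W)}$) to assume outright that the centre $z_0$ of $D$ satisfies $h_f^n(z_0)\in\partial W$; then $D$ is shrunk so that the only possible critical point of $h_f^n|_D$ is $z_0$ itself. Second, instead of seeking a conformal inverse branch, the paper embraces the branching: taking $\Omega_1:=(h_f|_\Omega)^{-1}(D)\Subset\Omega\Subset h_f^n(D)$, the map $h_f^{n+1}\colon\Omega_1\to h_f^n(D)$ is a proper branched cover with at most one critical value, $h_f^n(z_0)\in\partial W$, which lies outside $\overline{\Omega_1}\subset W$. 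This is exactly a polynomial-like map with escaping critical orbit, and Douady--Hubbard straightening (or the elementary argument in the footnote) provides a repelling periodic point in $\Omega_1$, whose image under $h_f$ lies in $D$. The point is that the Schwarz-lemma route cannot accommodate the branching of $h_f^N|_V$, while the polynomial-like route controls it to a single escaping critical value and handles it gracefully.
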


The  proof of Theorem \ref{th:islandp} is 
based on techniques from Pesin theory, which were first adapted to this context in \cite{BLS93measure}, and on local equidistribution results towards the Green currents $T^+$ and $T^-$
of the Hénon map $f$, which follow
from the local approch to these problems
developed in \cite{HOV95,Duj04henon,DS06geometry,DNS08dynamics}.

\medskip

As mentioned above,
as 
an application of  Theorem \ref{th:islandp},
we will also deduce an analogous of Shishikura's result for Hénon maps.

\begin{thm}\label{th:maxdim}
    Let $f$ be a dissipative semi-parabolic Hénon map as above. 
Then, 
    there exists a sequence $f_n \to f$ 
    of dissipative Hénon maps of the same algebraic degree 
    such that 
    $$\dim_H J^+(f_n) \to 4 \quad \mbox{ as } n\to \infty.$$
\end{thm}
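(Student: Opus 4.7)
The plan is to transpose Shishikura's argument \cite{Shishikura98hausdorff} for parabolic rational maps to the Hénon setting, exploiting the small island property of the horn map given by Theorem \ref{th:islandp} together with the parabolic implosion framework of Bedford–Smillie–Ueda \cite{BSU17semi}. The key structural observation is that, since $f$ is dissipative, for any hyperbolic set $K \subset J(f_n)$ of a nearby perturbation $f_n$, the forward Julia set $J^+(f_n)$ will have a local product structure in a neighbourhood of $K$: transversally to the stable foliation through $K$ it is a Cantor set of dimension $\dim_H(K \cap W^u_{\mathrm{loc}})$, while along the stable leaves it carries complex one-dimensional disks of real dimension $2$. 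It therefore suffices to produce, for each $\epsilon > 0$, a Hénon perturbation $f_n$ of $f$ carrying a hyperbolic subset of transverse (unstable) Hausdorff dimension exceeding $2-\epsilon$.

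The first step is to construct such hyperbolic sets at the level of the horn map. Fix $\epsilon > 0$, and pick a disk $D \Subset \C^*$ on which some iterate $h_f^N$ is strongly expanding (for instance around a repelling periodic point of $h_f$, whose existence is now guaranteed by Theorem \ref{th:islandp}). The small island property then supplies, for arbitrarily large $M$, arbitrarily many univalent inverse branches of $h_f^M$ mapping $D$ onto pairwise disjoint small disks contained in $D$. Koebe distortion estimates together with a Moran-type dimension formula (as in Shishikura's original proof) will show that the invariant limit set $K_\epsilon \subset \C^*$ of the resulting conformal iterated function system satisfies $\dim_H K_\epsilon > 2 - \epsilon$. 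Lifting through the semiconjugacy $h_f = \phi^\iota \circ \psi^o$ yields a corresponding hyperbolic Cantor set $\tilde K_\epsilon$ for the Hénon–Lavaurs map $\mathcal{L}_0 = \psi^o \circ \phi^\iota$, which can be arranged to lie in a compact subset of the parabolic basin $\mathcal{B}$.

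The second step is to use parabolic implosion to transfer $\tilde K_\epsilon$ to nearby Hénon maps. By \cite{BSU17semi,DL15stability}, there exist dissipative Hénon maps $f_n \to f$ of the same algebraic degree, and iteration times $N_n \to \infty$, such that $f_n^{N_n}$ converges to $\mathcal{L}_0$ uniformly on compacts of $\mathcal{B}$. Since $\tilde K_\epsilon$ is compact and hyperbolic, a persistence argument in the spirit of \cite{DL15stability} should produce, for $n$ large, a hyperbolic $f_n^{N_n}$-invariant set $K_\epsilon^{(n)} \subset J(f_n)$ close to $\tilde K_\epsilon$ whose transverse (unstable) Hausdorff dimension converges to $\dim_H \tilde K_\epsilon > 2 - \epsilon$. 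The product structure above would then yield
\[
\dim_H J^+(f_n) \;\geq\; 2 + \dim_H \bigl( K_\epsilon^{(n)} \cap W^u_{\mathrm{loc}} \bigr) \;>\; 4 - 2\epsilon
\]
for all sufficiently large $n$, and a diagonal extraction as $\epsilon \to 0$ produces the required sequence.

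The hardest step will be this last transfer. Parabolic implosion is a degenerate limit with $N_n \to \infty$ and only local convergence on $\mathcal{B}$, so persistence of $\tilde K_\epsilon$ and quantitative control of its Hausdorff dimension do not follow from standard structural stability. One will need to carefully track the conformal expansion rates along the implosion, likely combining the Pesin-theoretic tools used in the proof of Theorem \ref{th:islandp} with the local equidistribution estimates towards $T^\pm$ invoked there, so that $K_\epsilon^{(n)}$ inherits not merely the topological structure but also the full transverse dimension of $\tilde K_\epsilon$ in the limit.
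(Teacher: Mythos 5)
Your overall architecture (build a high-dimensional hyperbolic Cantor repeller at the level of the horn map, conjugate it to a Cantor set in $\Sigma$ invariant under a Lavaurs map, transfer it by parabolic implosion to nearby Hénon maps, saturate by stable manifolds) matches the paper's. But your Step 1 has a genuine gap, and a secondary issue in Step 2 also deserves flagging.

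\textbf{The gap in Step 1.} You propose to extract, directly from the small island property of $h_f$, a conformal IFS inside a fixed disk $D$ (around a repelling cycle) with $\log N / \log c$ close to $2$. The small island property guarantees the existence of univalent islands over $\mathbb{D}(z_0, r(z_0))$, but says nothing about the \emph{derivatives} of those branches relative to their \emph{number}; the Moran/Bowen formula then gives only \emph{some} positive dimension, not dimension $\to 2$. Shishikura's original argument does not get this estimate from island existence alone: it gets it from a parabolic/renormalization structure that produces, after perturbation, many branches with nearly equal moduli of derivatives, at the correct scaling. The fixed map $h_f$ carries no such parabolic structure a priori, so "as in Shishikura's original proof" does not transport. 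The paper supplies this missing structure by replacing $h_f$ with the one-parameter \emph{family} $h_\lambda = \lambda h_f$ (equivalently, replacing $\mathcal{L}_0$ by $\mathcal{L}_\alpha$, the phase $\alpha$ playing the role of $\lambda$), using the critical point of $h_f$ (from \cite{DL15stability}), the density of repelling points (Theorem~\ref{th:islandp}), and a shooting lemma to produce a \emph{local Misiurewicz bifurcation} at some $\lambda_0$, then invoking McMullen's theorem~\ref{th:mcmu} to obtain small quasiconformal copies of $\mathbf{M}_d$ in the $\lambda$-plane, and finally applying Shishikura's theorem \emph{to $\mathbf{M}_d$} to choose a parameter $c$ whose hybrid-equivalent restriction of $h_{\lambda_n}$ carries the desired IFS. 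In particular you cannot get away with $\mathcal{L}_0$ alone: the phase $\alpha$ (i.e.\ the multiplicative parameter) is where the bifurcation lives, and omitting it is not a simplification but a loss of the mechanism.

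\textbf{Step 2.} Your transfer outline is close to the paper's in spirit, but two points you leave vague are exactly the delicate ones. First, passing from the unstable/transverse dimension of the perturbed Cantor set to $\dim_H J^+(f_n) \geq 2 + (\text{transverse dimension})$ requires the stable holonomy to be \emph{Lipschitz}, not merely Hölder (otherwise you lose dimension); the paper invokes \cite{lyubichcft, LP21structure} for this. Second, the transverse dimension of the perturbed Cantor set is not computed by structural stability alone: the paper first thickens the one-dimensional IFS to a horizontal-like map $g_n$ on a bidisc, checks the inclusion relations \eqref{e:incl-g} persist, then computes the unstable conditional dimension via the Ledrappier--Young formula (entropy $\log N$ divided by the unstable exponent $\chi_n^+$), with $\chi_n^+$ controlled by Lemma~\ref{l:chin} using the convergence $f_{\epsilon_n}^n \to \mathcal{L}_\alpha$. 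These are concrete substitutes for the ``careful tracking'' you gesture at, and without them the dimension estimate does not follow.
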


In order to prove
Theorem \ref{th:maxdim}, 
we show that every holomorphic map with the small island property can be suitably modified (i.e., can be multiplied by a suitable constant)
so that
it has
arbitrarily
large
(i.e., close to 2)
 hyperbolic dimension. 
Assume for the 
sake of simplicity that the multiplication is not necessary
(this is just a minor technical point).
We can
apply this to the horn map $h_f$, and recall
that the H\'enon-Lavaurs map $\mathcal L_0$
is the limit of suitable large iterates $f^n_{\epsilon_n}$
of perturbations $f_{\epsilon_n}$
of $f$.
By the conjugacy, large hyperbolic sets for $h_f$ give
a large limit set for
suitable iterates of $\mathcal L_0$.
As these large hyperbolic sets
persist under the perturbation $f\mapsto f_{\epsilon_n}$
for $n$ sufficiently large, this leads to 
hyperbolic limit
sets $\mathcal H_n$
for suitable iterates of $f_{\epsilon_n}^n$
with large (i.e., close to 2)
unstable dimension.
As the forward Julia set $J^+ (f_{\epsilon})$
contains the union of the stables manifolds of the points of $\mathcal H_n$, this leads to the desired result.

\subsection*{Acknowledgments}

The authors would like to express their gratitude to Eric Bedford for bringing this problem to their attention.
The first author would also like to thank the University of Pisa for the warm welcome and the excellent work conditions.

This project has received funding from
 the French government through the Programme
 Investissement d'Avenir
(ANR QuaSiDy /ANR-21-CE40-0016,
ANR PADAWAN /ANR-21-CE40-0012-01)
and from the MIUR Excellence Department Project awarded to the Department of Mathematics, University of Pisa, CUP I57G22000700001.
Both authors are part of the PHC Galileo project G24-123.

\section{Island properties and consequences}

\subsection{Ahlfors island maps and finite type maps}

\begin{defi}\label{def:sing}
	Let $X$  and $W$ be a Riemann surface, 
 with $X$ connected, 
 and 
 $f: W \to X$
 a holomorphic map. The \emph{singular value set} $S(f)$  of $f$
 is the smallest subset of $X$ such that $f: W_0 \setminus f^{-1}(S(f)) \to X \setminus S(f)$ is a covering map for every 
 connected component $W_0$ of $W$.
\end{defi}

As covering maps
are surjective, 
it follows from the above definition that
we have $X \setminus f(W) \subset S(f)$.

\begin{defi}\label{d:finite-type}\cite{epstein1993towers}
	Let $W \subset \rs$ be a non-empty open set and $f: W \to \rs$ 
 a holomorphic map. We say that $f$ is a 
 \emph{finite type map
 on $\rs$} if 
	\begin{enumerate}
		\item $f$ is non-constant on every connected component of $W$;
		\item $f$ has no removable singularities;
		\item $S(f)$ is finite.
	\end{enumerate}
\end{defi}

\begin{defi}\label{d:island}\cite{lasserippon, lassevolker}
    Let $W \subset \rs$ be a non-empty open set and  $f: W \to \rs$
    a holomorphic map.
    We say that $f$ has the
    \emph{$N$ islands property} 
    if,
    given any $N$ Jordan domains $D_1, \ldots, D_N \subset \rs$ with pairwise disjoint
    closures 
    and any open set $U$ intersecting $\partial W$, there exists $1 \leq i_0 \leq N$ and an open set $\Omega \Subset U \cap W$ such that $f: \Omega \to D_{i_0}$ is a conformal isomorphism.
 If there exists $N \geq 1$ such that $f$ has the $N$ islands property, we say that $f$ is an \emph{Ahlfors island map}.
\end{defi}

\begin{rem}\label{r:island-small-island}
In 
Definition \ref{d:small-island}, 
$z_0$ must be chosen different from $0$ and $\infty$. 
Hence, the small island property
as in that definition can be seen as a weaker version of the 
{ 3 islands} property above. 
We could give a more general definition of the small { $N$ islands} property, admitting $N-1$ exceptions as in the Definition \ref{d:island}. The proofs in this section
would be the same. We just define the precise version
of the property that we will prove for 
the horn maps 
of semi-parabolic Hénon maps
for simplicity.
\end{rem}

Also note that the $1$ island property is vacuously satisfied if $\partial W=\emptyset$, that is, if $f: \rs \to \rs$ is a rational map. This case is however very special, and our arguments will often use the fact that $\partial W \neq \emptyset$.

\subsection{Julia sets}

We fix in this section 
an open set 
$W \subset \rs$
and a holomorphic map
 $f: W \to \rs$. We give here two natural definitions of the Julia set of $f$.
The first 
is related to the notion of non-normality.
Since $W \neq \rs$, 
the definition needs to take into account
orbits leaving
the domain $W$.

\begin{defi}[Definition of the Julia set as non-normality locus]\label{d:julia-normal}
   The Fatou set $F(f)$
   of $f$
   is the union of all open sets $U \subset  \rs$ such that either
    \begin{enumerate}
    \item $f^n(U) \subset W$ for all $n \in \N$, and $\{f^n: U \to W\}$
    is normal; or 
    \item there exists $n \in \N$ such that $f^n(U) \subset \rs \setminus \overline{W}$, where $\overline{W}$ denotes the closure of $W$ in $\rs$.
    \end{enumerate}
    The set $J_F(f)$ is the complement of  $F(f)$
    in $\rs$.
\end{defi}

Observe that according to this definition, we always have $\partial W \subset J_F(f)$
and  $\rs \setminus \overline{W} \subset F(f)$.
Given a point in 
the Fatou set, all its orbit is in the Fatou set; conversely, given a point in
$J_F (f)$, all preimages are in $J_F(f)$.
Moreover, it is clear that repelling periodic points are always in $J_F(f)$. This leads to the second definition, and to the inclusion $J_R(f) \subset J_F(f)$.

\begin{defi}[Definition of the Julia set by means of repelling periodic points]\label{d:julia-rep}
 The set  $J_R(f)$ is the closure of the set of all repelling periodic points of $f$.
\end{defi}

\subsection{A consequence 
of the small island property}
We fix again  
an open set 
$W \subset \rs$
and a holomorphic map
 $f: W \to \rs$. 
By \cite{epstein1993towers}, we have $J_F(f)=J_R(f)$
if $f$ 
is an Ahlfors island map.
In this section 
we show that the small island property as in Definition \ref{d:small-island} 
is still enough to ensure this property.
For simplicity, we will denote by
$W_\infty$ the interior of 
$\bigcap_{n \geq 0} f^{-n}(W)$
in the rest of this section.

\begin{lem}\label{lem:boundary}
Assume that 
either $W_\infty=\emptyset$ or all connected components of $W_\infty$ are
hyperbolic.
Then we have
	$J_F(f)=\overline{\bigcup_{n \geq 0} f^{-n}(\partial W)}$.
\end{lem}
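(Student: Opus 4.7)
The proof amounts to a double inclusion, and the substantial direction is the forward one. For the inclusion $\overline{\bigcup_{n \geq 0} f^{-n}(\partial W)} \subset J_F(f)$, I would combine the two observations made right after Definition \ref{d:julia-normal}: $\partial W \subset J_F(f)$ and $J_F(f)$ is backward invariant. Together these give $f^{-n}(\partial W) \subset J_F(f)$ for every $n \geq 0$, and since $J_F(f)$ is closed (as $F(f)$ is open by definition), the closure of the union of the preimages is itself contained in $J_F(f)$.

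For the converse inclusion, fix $z \notin \overline{\bigcup_n f^{-n}(\partial W)}$ and choose a connected open neighborhood $U$ of $z$ disjoint from every $f^{-n}(\partial W)$; the goal is to show $U \subset F(f)$. The connectedness of $U$ is essential: arguing inductively on $n$, whenever $f^n|_U$ is defined the image $f^n(U)$ is connected and disjoint from $\partial W$, so it lies entirely in $W$ or entirely in $\rs \setminus \overline{W}$. Two scenarios occur. Either some iterate $f^{n_0}(U)$ falls in $\rs \setminus \overline{W}$, in which case condition (2) of Definition \ref{d:julia-normal} is met and $U \subset F(f)$; or $f^n(U) \subset W$ for every $n$, so $U \subset W_\infty$. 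Under the hypothesis $W_\infty = \emptyset$ the second alternative is ruled out, so only the first occurs and the inclusion is established.

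Under the second clause of the hypothesis, $U$ sits in some component of $W_\infty$ and each component is hyperbolic; we need to verify condition (1) of Definition \ref{d:julia-normal}, namely the normality of $\{f^n|_U\}$ as a family of holomorphic maps into $\rs$. The key point is that $\rs \setminus W_\infty$ contains at least three points: if $W_\infty$ is connected this is precisely the hyperbolicity assumption for an open subset of $\rs$, while if $W_\infty$ is disconnected it follows from the elementary topological fact that $\rs$ minus at most two points is still connected, so separating distinct components of $W_\infty$ already requires at least three omitted points. Fixing three such points $a, b, c \in \rs \setminus W_\infty$, the family $\{f^n|_U\}$ takes values in $\rs \setminus \{a,b,c\}$ and Montel's theorem immediately yields normality, completing the proof. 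I expect the main technical subtlety to be precisely this identification of a single universal triple $\{a,b,c\}$ omitted by every iterate simultaneously, even though individual iterates may a priori jump between different components of $W_\infty$; once the triple is exhibited, classical Montel closes the argument.
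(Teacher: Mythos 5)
Your proof is correct and follows essentially the same route as the paper's: the easy inclusion holds by definition, and for the converse you reduce, via connectedness of iterated images of a small neighborhood, to showing $W_\infty \subset F(f)$. The only variation is in that last step, where the paper notes that $f\colon W_\infty \to W_\infty$ is non-increasing for the hyperbolic metric, whereas you apply Montel directly by exhibiting three points in $\rs \setminus W_\infty$ (correctly observing that disconnectedness of $W_\infty$ also forces at least three omitted points); the two arguments are interchangeable.
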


\begin{proof}
	The inclusion $\overline{\bigcup_{n \geq 0} f^{-n}(\partial W)} \subset J_F(f)$ is always true by the Definition \ref{d:julia-normal} of $J_F(f)$.

	Conversely,  if $W_\infty\neq \emptyset$, $W_\infty$ is completely invariant 
	and $f: W_\infty \to W_\infty$ is non-increasing for the hyperbolic metric.
 Therefore, we have $W_\infty \subset F(f)$. Clearly, this inclusion still holds if $W_\infty=\emptyset$.

Let $U$ be a connected open set intersecting $J_F(f)$. By the inclusion proved above, $U$ cannot be contained in $W_\infty$.
Therefore, there exists $n \in \N$ such that $f^n(U) \cap (\rs \setminus W) \neq \emptyset$.
Moreover, we must have $f^n(U) \cap \partial W \neq \emptyset$, for otherwise we would have $f^n(U) \subset \rs \setminus W$. By definition, this would give
$U \subset F(f)$, contradicting 
the assumption 
$U\cap J_F (f)\neq \emptyset$.

The above proves that, for any open domain $U$ intersecting $J_F(f)$, there exists $n \in \N$
such that $f^{-n}(\partial W) \cap U \neq \emptyset$.
Again by definition, we have
$f^{-n}(\partial W)  \subset J_F(f)$.
	This shows that 
 $\bigcup_{n \geq 0} f^{-n}(\partial W)$ is indeed dense in $J_F(f)$, and concludes the proof.
\end{proof}





\begin{thm}\label{th:repdense}
	Assume that $f$ has 
 the small island
 property. 
	Then $J_F(f)= J_R(f)$.
\end{thm}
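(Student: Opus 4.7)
The easy inclusion is $J_R(f) \subset J_F(f)$: at a repelling periodic point the iterates $\{f^n\}$ fail to be normal. For the reverse inclusion, the plan is to fix $p \in J_F(f)$ and an arbitrary open neighbourhood $U$ of $p$ and produce a repelling periodic point of $f$ inside $U$. The strategy will be the classical one adapted to the small island setting: I aim to construct a point $q \in U \cap \mathbb{C}^*$, an integer $m \geq 1$, and a univalent inverse branch $g$ of $f^{m}$ defined on $\mathbb{D}(q, r(q))$ with $g(\mathbb{D}(q, r(q))) \Subset \mathbb{D}(q, r(q))$. The Schwarz--Pick lemma will then force $g$ to be a uniform hyperbolic contraction of $\mathbb{D}(q,r(q))$, giving a unique attracting fixed point $z^* \in g(\mathbb{D}(q,r(q))) \subset U$, which is a repelling periodic point of $f$ of period dividing $m$.

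The first step is to find a good base point $q$. Since $\{0,\infty\}$ is discrete I may assume $p \in \mathbb{C}^*$ and $U \subset \mathbb{C}^*$. Applying Lemma \ref{lem:boundary}, the set $\bigcup_{n \geq 0} f^{-n}(\partial W)$ is dense in $J_F(f)$, so I can choose $q \in U$ and $n_0 \geq 0$ with $f^{n_0}(q) \in \partial W$. Critical points of $f^{n_0}$ form a discrete subset of $W$, so a generic such $q$ satisfies $(f^{n_0})'(q)\neq 0$, and after this generic choice $f^{n_0}$ is univalent on some sufficiently small connected neighbourhood $V$ of $q$.

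I then shrink $V$ further so that $V \Subset U \cap \mathbb{D}(q, r(q))$, where $r(q) > 0$ is the radius provided by the small island property (Definition \ref{d:small-island}) applied at $z_0 = q \in \mathbb{C}^*$. Since $f^{n_0}(V)$ is a domain containing $f^{n_0}(q) \in \partial W$, it intersects $\partial W$, so the small island property applied to the domain $f^{n_0}(V)$ with $z_0 = q$ yields a subdomain $\Omega \Subset f^{n_0}(V) \cap W$ with $f \colon \Omega \to \mathbb{D}(q, r(q))$ a conformal isomorphism. Composing the two inverse conformal isomorphisms gives the desired univalent inverse branch of $f^{n_0+1}$,
\[
g := (f^{n_0}|_V)^{-1} \circ (f|_\Omega)^{-1} \colon \mathbb{D}(q, r(q)) \longrightarrow V.
\]
As $\Omega \Subset f^{n_0}(V)$ and $(f^{n_0}|_V)^{-1}$ is a homeomorphism onto $V$, one has $g(\mathbb{D}(q, r(q))) \Subset V \Subset \mathbb{D}(q, r(q))$, closing the contraction argument above and producing $z^* \in V \subset U$ in $J_R(f)$.

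The main obstacle in this plan is the reduction performed in the second paragraph, which rests on the density of $\bigcup_n f^{-n}(\partial W)$ in $J_F(f)$: this is exactly Lemma \ref{lem:boundary}, but its hypothesis on $W_\infty$ must be known to hold. Once this density is in hand, the rest is a straightforward adaptation of the Ahlfors-island proof of density of repelling cycles, the small island property of Definition \ref{d:small-island} substituting for the full $N$ islands property precisely because one is free to position the target disk around any prescribed $z_0 \in \mathbb{C}^*$, and in particular arbitrarily close to $\partial W$.
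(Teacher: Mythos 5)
Your high-level strategy matches the paper's: reduce to a point whose forward orbit meets $\partial W$ using Lemma \ref{lem:boundary}, then use the small island property to manufacture a repelling cycle nearby. However, there are two genuine gaps.

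First, and most seriously, the claim that ``a generic such $q$ satisfies $(f^{n_0})'(q)\neq 0$'' is not justified. The set of points $q\in U$ with $f^{n_0}(q)\in\partial W$ for some $n_0$ is only guaranteed to be \emph{dense} in $J_F(f)\cap U$; it may a priori be countable (e.g.\ if $\partial W$ is a single point), and there is no reason it cannot be entirely contained in the countable set $\bigcup_n\mathrm{Crit}(f^n)$. You therefore cannot simply discard the critical points. The paper avoids this issue entirely by \emph{allowing} $z_0$ to be a critical point of $f^{n}$: it takes a disk $D\ni z_0$ small enough that $f^n\colon D\to f^n(D)$ is a branched cover ramified only possibly over $f^n(z_0)\in\partial W$, applies the small island property to $U=f^n(D)$, and observes that the resulting restriction of $f^{n+1}$ is a \emph{polynomial-like} map whose unique critical value lies in $\partial W$ and hence escapes. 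Such a map has a repelling fixed point by Douady--Hubbard's straightening theorem (or by the elementary argument cited from Epstein). Your Schwarz--Pick contraction is exactly the degree-one special case of this polynomial-like argument; the extra generality is what makes the critical-point issue disappear. The fix is to replace your univalent inverse branch of $f^{n_0}$ on $V$ by a branched-cover restriction on a small disk around $q$, exactly as the paper does.

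Second, you correctly flag that Lemma \ref{lem:boundary} requires $W_\infty$ to be empty or to have only hyperbolic components, but you leave this hypothesis unverified. The paper closes this by a short case analysis: if $W_\infty$ has a non-hyperbolic component, that component is $\rs$, $\C$, or $\C^*$, and $f$ is then respectively a rational map, a transcendental entire map, or a transcendental self-map of $\C^*$; in each of these classical cases the density of repelling cycles in the Julia set is already known, so the theorem holds. This reduction should be made explicit before invoking Lemma \ref{lem:boundary}.
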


\begin{proof}
Recall that the inclusion $J_R(f) \subset J_F(f)$
always holds by definition. Hence, we only have to prove the reversed inclusion.

	If $W_\infty$ is non-empty and has a non-hyperbolic component, 
 it is isomorphic to  $\rs$, $\C^*$,  or $\C$.
 Then, $f$ is either a rational map, a transcendental self-map of $\C^*$ or a transcendental entire map.  In all these cases, the result is classical.
 Therefore, we only need to deal with the case where 
 $W_\infty$
 is either empty or
 has a hyperbolic connected component. In particular, Lemma \ref{lem:boundary} applies.
	
	Fix $z_0 \in J_F(f)$. Since $J_F(f)$ has no isolated points, 
 we may assume without loss of generality that $z_0 \in \C^*$.
 By Lemma \ref{lem:boundary}, we may also assume that $f^n(z_0) \in \partial W$ for some $n \in \N$.
	Let $D$ be a 
 small disk centered at $z_0$
 small enough so that $f^n: D \to f^n(D)$ is a branched cover, ramified only possibly at $f^n(z_0)$.
 We can also assume that the radius of $D$ is smaller than the quantity $r(z_0)$ as in the Definition \ref{d:small-island}
of the small island property. It is enough to show that there exists a repelling point in $D$.

Let $\Omega \Subset f^n(D)$ be the simply connected domain given by the small island property applied with $U=f^n (D)$. Then, the map $f^{n+1}: f^{n} (D)\Supset \Omega \to f^{n}(D)$ is a branched cover with at most one critical value $f^n(z_0)$, which lies outside of $\overline{\Omega}$, and in particular, it is polynomial-like.
It follows from, e.g., Douady-Hubbard's Straightening theorem \cite{DH85dynamics} that 
 $f^{n+1}:  \Omega \to f^{n}(D)$ has a repelling periodic point\footnote{Actually, since this polynomial-like map has only one critical value which is escaping, it can also
 be proved by elementary means that it has a repelling fixed point, see for instance \cite{epstein1993towers}.}. 
 
 Therefore, $D$ contains a repelling periodic point for $f$.
This proves the inclusion $J_F(f) \subset J_R(f)$
and completes the proof. 
\end{proof}

\begin{rem}\label{r:natural}
A holomorphic family of maps 
$f_\la: W_\la \to \rs$ is \emph{natural} 
\cite{EL92entire,ABF} if it is 
of the form $f_\la = \phi_\la \circ f \circ \psi_\la^{-1}$, where 
$W\subset \rs$ is an open set,
$f: W \to \rs$ a holomorphic map, $W_\lam:=\psi_\la(W)$ and $\phi_\la, \psi_\la: \rs \to \rs$ 
are homeomorphisms depending holomorphically on $\la \in M$.
It is straightforward to check that if $f$ has the small island property, then 
each map $f_\la$ in 
a natural family as above
also has the small island property.
In the proof of Theorem \ref{th:maxdim}, we will be interested in the case of a family of maps of the form 
$h_\la = \la h$, where $h$ has the small island property and $\la \in \C^*$. 
It is in particular a natural family, with $\phi_\la(z):=\la z$ and $\psi_\la:=\id$.
\end{rem}

\section{Preliminaries on Hénon and horizontal-like maps}
\label{s:prelim}

We will consider in this section an automorphism $f$ of $\mathbb C^2$
of the form
\begin{equation}\label{eq:henon}
f(z,w) = (p(z)+ aw, z),
\end{equation}
where $p$ is a monic polynomial 
of degree $d\geq 2$
and $a$ is some constant in
$\mathbb C^*$. 
Any $f$ as above is usually referred to as a \emph{(generalized) Hénon map}. Observe that the Jacobian of $f$ is constant, and equal to $|a|$. We say that $f$
is \emph{dissipative} if $|a|<1$. 
\medskip

By results of Jung  \cite{Jung42birationale}
and Friedland-Milnor \cite{FM89dynamical},
every
polynomial automorphism $f$ of $\mathbb C^2$
is conjugated 
(in the group of polynomial automorphisms)
to either an \emph{elementary automorphism},
i.e., a map 
of the form
$(z,w)\mapsto (a z + p(w), bw+c)$
for some  $a,b\in \mathbb C^*$, $c\in \mathbb C$, and 
polynomial $p$ of degree $d\geq 0$,
or to a \emph{Hénon-type map}, i.e., a finite composition of generalized Hénon maps as in \eqref{eq:henon}. 
As the dynamics of elementary automorphisms is simple to describe, we will just consider in the following maps of the second type. For simplicity, we will just consider Hénon maps, but the picture is the same when considering finite compositions as above.

\subsection{Filtration property and induced horizontal-like map}\label{ss:filtration}

Let 
$f$ be as in \eqref{eq:henon}. Friedland and Milnor 
\cite{FM89dynamical}
showed that it is possible to decompose $\mathbb C^2$ in a dynamically meaningul way, as follows. Let $\mathbb D(0,R)\subset \mathbb C$ be the disc of center 0 and radius $R$. Set
\[
\begin{aligned}
D_R & := \mathbb D(0,R)^2,\\
V_R^+ & := \{(z,w)\in \mathbb C^2\colon |z|>\max (|w|,R)\}, \mbox{ and}\\
V_R^- & := \{(z,w)\in \mathbb C^2\colon |w|>\max (|z|,R)\}.
\end{aligned}
\]
We also denote
by $K^+$ (resp. $K^-$) the set of points whose  orbit under $f$ (resp. $f^{-1}$)
is bounded.

\begin{lem}
\label{lem:filtration}
The following assertions hold for every $R$ sufficiently large.
\begin{enumerate}
\item $f(V^+_R)\subset V_R^+$ and $V_R^+ \cap K^+= \emptyset$;

\item $f(D_R \cup V_R^+)\subset D_R \cup V_R^+$;
\end{enumerate}
\end{lem}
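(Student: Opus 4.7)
The plan is to verify both assertions by direct estimates, exploiting that $p$ is monic of degree $d\geq 2$, so that $|p(z)|$ behaves like $|z|^d$ and dominates every lower-order contribution as soon as $|z|$ is large. Throughout, $R$ will be chosen at the end to be simultaneously larger than finitely many explicit constants depending on the coefficients of $p$ and on $|a|$.

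First I would establish the invariance $f(V^+_R)\subset V^+_R$ in part (1). Given $(z,w)\in V^+_R$, the image is $f(z,w)=(p(z)+aw,z)$. From $|z|>\max(|w|,R)$, a standard lower bound on the leading term of $p$ gives $|p(z)|\geq \tfrac{1}{2}|z|^d$ for $R$ large enough, and combined with $|aw|\leq |a|\,|z|$ this yields
\[
|p(z)+aw|\;\geq\;\tfrac{1}{2}|z|^d-|a|\,|z|.
\]
Since $d\geq 2$, the right-hand side exceeds $|z|$ (and hence $R$) provided $R$ is chosen large enough. As the second coordinate of $f(z,w)$ is $z$, this precisely says that $f(z,w)\in V^+_R$. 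For the escape statement, iterating this estimate shows that the first coordinate of $f^n(z,w)$ grows at least like a tower $|z|^{d^n}/C^{d^n}$, so the orbit of any point in $V^+_R$ is unbounded, proving $V^+_R\cap K^+=\emptyset$.

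For part (2), since part (1) already gives $f(V^+_R)\subset V^+_R\subset D_R\cup V^+_R$, it is enough to treat a point $(z,w)\in D_R$, i.e.\ $|z|,|w|\leq R$. The image $f(z,w)=(p(z)+aw,z)$ has second coordinate $z$ with $|z|\leq R$, so the dichotomy is clean: if $|p(z)+aw|\leq R$, then $f(z,w)\in D_R$; otherwise $|p(z)+aw|>R\geq |z|$, placing $f(z,w)\in V^+_R$. In either case $f(z,w)\in D_R\cup V^+_R$, which closes the argument.

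The main obstacle is essentially non-existent: the whole lemma reduces to choosing $R$ large enough that $\tfrac{1}{2}|z|^d$ absorbs both the linear term $|a|\,|z|$ and the threshold $R$ whenever $|z|>R$. The only thing one has to be mildly careful about is to pick $R$ uniformly so that both $|p(z)|\geq \tfrac{1}{2}|z|^d$ and $\tfrac{1}{2}R^d-|a|R>R$ hold simultaneously, which is possible because $d\geq 2$.
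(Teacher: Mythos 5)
The paper cites this lemma to Friedland--Milnor without reproducing a proof, and your argument is the standard filtration estimate from that reference; it is correct. Note only that you implicitly take $D_R$ to be the closed bidisc (you write $|z|,|w|\leq R$), which is the convention under which the dichotomy in part (2) closes cleanly: with the open bidisc a point of $D_R$ with $|p(z)+aw|=R$ would land on the boundary circle and lie in neither $D_R$ nor $V_R^+$.
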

 
Similar assertions hold replacing $f$, $V_R^+$, and $K^+$ with $f^{-1}$, $V_R^-$, and $K^-$, respectively.

\medskip

Although our maps will always be globally defined,
in the following
we will sometimes need to work in a semi-local setting, that we now describe.
A \emph{vertical
subset} of $D_R$ is a subset of $D_R$ whose closure in $\mathbb C^2$
is disjoint from the 
\emph{vertical boundary} 
$\partial \mathbb D(0,R)\times \mathbb D(0,R)$ of 
$D_R$. Similarly, a
\emph{horizontal
subset} of $D_R$ is a subset of $D_R$ whose closure 
is disjoint from the \emph{horizontal
boundary} 
$\mathbb D(0,R) \times \partial \mathbb D(0,R)$.

We fix $R$ sufficiently large for Lemma \ref{lem:filtration} to hold.
The map $f$ can be seen as a map from the set 
$D_R\cap f^{-1} (D_R)$ to the set $D_R\cap f(D_R)$. In particular, since $f^{-1}(D_R)\Subset D_R\cup V^-$ and $f(D_R)\Subset D_R\cup V^+$, 
$f$ is a holomorphic map from a vertical subset of $D_R$ to a horizontal one and
it is a so-called \emph{horizontal-like map}.
We refer to \cite{HOV95,Duj04henon,DS06geometry,DNS08dynamics}
for the precise definition and their properties. 
We will use the notation $\tilde f$ 
for the horizontal-like map associated to $f$ as above, when we will need to emphasize it.

\medskip

\subsection{The Green currents $T^+$ and $T^-$}

Let $f$ be a Hénon map. Denote by $G^\pm$ the
functions $G^\pm (z,w) := \lim_{n\to \infty} d^{-n} \log^+ \|f^{\pm 1}(z,w)\|$, where $\log^+ (\cdot) = \max (0,\log (\cdot))$. Such functions, usually called the \emph{Green functions} of $f$
and $f^{-1}$ respectively,
are well defined (as the convergences are uniform of every compact subset of $\mathbb C^2$).
They are H\"older continuous and plurisubharmonic
\cite{FS92henon,Hubbard86Henon}. 
Hence, the 
$(1,1)$-currents given by
$T^\pm := dd^c G^\pm$ are positive closed.
They
are the \emph{Green currents}
of $f$ and $f^{-1}$
and they 
describe -- in a quantified sense
--
the distribution of the iterate of curves under forward and backward iteration of $f$ \cite{BS91-BS1,FS92henon}.
Their support is equal to $J^\pm:= \partial K^\pm$, respectively. More precisely, they 
are the unique positive $dd^c$-closed currents supported on $K^\pm$, respectively
\cite{DS14rigidity}.
By \cite{BS91-BS2}, the convergence
\begin{equation}\label{eq:conv-henon}
d^{-n} [f^{-n} (M)] \to c_M T^+
 \end{equation}
 holds
for every locally closed submanifold $M\subset \mathbb C^2$ satisying $M\subset J^+$ or $M\subset X$, where $X$ is algebraic. Here $c_M$ is a constant depending on $M$, and we have $c_M >0$ if, for instance, one has $T^-|_{M}>0$.
A similar property holds for $T^-$. 

In the semi-local setting described in Section
\ref{ss:filtration}, 
the convergence above can improved. For a given large $R$, recall that we denote
by $\tilde f$ the associated horizontal-like map on $D_R$. By \cite{Duj04henon}, we can associate to $\tilde f$ its Green current
$\tilde T^+$, which is vertical, and the Green current of
$\tilde f^{-1}$, $\tilde T^-$, which is horizontal in $D_R$.
By \cite{Duj04henon,DNS08dynamics}, we have
\begin{equation}\label{eq:conv-hlm}
d^{-n}  \tilde f^* R \to c_R \tilde T^+
\end{equation}
for every positive closed
vertical current on $D_R$. 
Here $c_R>0$ is a constant depending on $R$
(and it is equal to its \emph{vertical mass}, see \cite{Duj04henon,DS06geometry}; when $R$ is smooth, it is equal to the mass of the restriction of $R$ to any horizontal line in $D_R$).
It follows from \eqref{eq:conv-henon} and \eqref{eq:conv-hlm} that $\tilde T^{\pm} = T^{\pm}_{|D_R}$.

\subsection{The equilibrium measure and Pesin boxes}\label{ss:pesin-boxes}

As the Green functions $G^\pm$ are continuous,
the intersection $\mu := T^+\wedge T^-$ is a
well-defined probability measure, and is the unique measure of maximal entropy of $f$ \cite{BLS93measure,Sibony99}. It satisfies remarkable ergodic
properties \cite{BLS93measure,BLS93periodic,Dinh05decay,BD24clt}. We recall here those that we will need in the sequel.
We denote by $\lambda^+$ and $\lambda^-$ the two Lyapunov exponents of $\mu$.
Recall
\cite{BS98-BS5}
that we have $\lambda^+>0$ and $\lambda^-<0$, hence $\mu$ is hyperbolic in the sense of Pesin theory.

By Oseledec's ergodic theorem, there exist
a full measure subset $\mathscr R$ of the support of $\mu$ 
and two measurable
distributions of $1$-dimensional subspaces
$E^s,E^u
\colon
\mathscr R\to T\mathbb C^2$ with 
$E^s(x),E^u (x)\in T_x\mathbb C^2$
with the property that
for every $x\in \mathscr R$, we have
$E^s(x)\neq E^u(x)$,
$Df (E^{s/u} (x))= E^{s/u} (f (x))$
and
\[
\lim_{n\to \infty} \|Df^n(v)\|= \lambda^+ 
\quad 
\forall v \notin E^s (x)
\quad
\mbox{ and }
\quad
\lim_{n\to \infty} \|Df^{-n}(v)\|= \lambda^- 
\quad
\forall v \notin E^u (x).
\]
The angle between $E^s$ and $E^u$ along an orbit is also controlled. 

Given $r>0$ and $x\in \mathscr R$, 
we denote by 
$B^s_r (x)$ and $B^u_r (x)$
the 1-dimensional affine
discs in $\mathbb C^2$ centred at $x$, of
radius $r$, and whose tangent at $x$ is given by
$E^s(x)$ and $E^u(x)$, respectively.
By Pesin theory, 
for every $x$ there is an $r(x)$ such that
the 
stable and unstable
manifolds $W^s(x)$
and $W^u(x)$ of $x$ are locally 
graphs over $B^s_r(x)$
and $B^u_r(x)$, respectively. We denote by $W^s_r(x)$
and $W^u_r(x)$ these local stable and unstable manifolds, respectively.

Fix $r>0$ and denote by $\mathscr R_r$
the set
of points $x\in \mathscr R$ such that $r(x)\geq r$
\footnote{Further conditions should be imposed on $\mathscr R_r$, see \cite[(4.2), (4.3), and (4.4)]{BLS93measure}.
Since we will not need them and they are always true up to a zero-measure subset, we will not focus on this issue here.}.
Let  $F$ be a compact subset of $\mathscr R_r$, and assume that the diameter of $F$ is $\ll r$.
We denote by
$W^s_r(F)$ and $W^u_r(F)$ the union of the sets $W^s_r(x)$ and $W^u_r(x)$ for $x\in F$.
We call the set $P:= W^s_r(F)\cap W^u_r(F)$
the \emph{Pesin box}
\cite{Pesin77lyapunov}
generated by $F$.
By \cite{BLS93measure}
(see also \cite{Duj04henon}), there exist
a compact set
$P^s$ which is homeomorphic to $W^u_r(x)\cap W^s_r (F)$ for all $x\in F$
and
 a compact set $P^u$ 
 which is homeomorphic to $W^s_r(x)\cap W^u_r (F)$ for all $x\in F$. Then,
 $P$ is homeomorphic to $P^s\times P^u$.
 Moreover, there exists a neighbourhood $N=N(P)$
 of $P$, biholomorphic to a bidisk, such that
 (the image of) every $W_N^s (x):=W^s_r (x)\cap N$ 
 (resp. $W^u_N (x):= W^u_r (x)\cap N$)
 is a vertical (resp. horizontal) graph, and for every
 $x,y\in P$ the unique intersection point between $W^s_N (x)$ and $W^u_N (x)$ is in $P$.

It follows from Pesin theory
\cite{Pesin77lyapunov}
that, up to a negligible set, the support of $\mu$ can be covered by
means of just countably many
Pesin boxes.

\subsection{Semi-parabolic dynamics and horn maps}
\label{ss:prelim-parabolic}

Let us 
assume
from now on  that a Hénon map
$f$ has a semi-parabolic point 
of order $2$
at the origin $\mathbb O$
of $\mathbb C^2$. 
A description 
of the local dynamics of $f$ near the semi-parabolic point is given in \cite{Ueda86local1,Ueda91local2,BSU17semi}.
Following \cite{BSU17semi}, we recall here the definitions and results that we will need in the sequel.
We refer to \cite{lavaurs1989systemes,Shishikura00bifurcation} 
for the
earlier one-dimensional counterparts of these definitions and results.

\medskip

Up to suitable changes of coordinates, we can assume that the local form of $f$
near $\mathbb O$ is given by
\[
f(z,w)
=
(z+  z^2 + O(z^3),
bw + O(zw))
\]
for some $0<|b|<1$
(a more precise development is given in \cite{DL15stability}, but we will not need it here).

We denote by $\mathcal B$ the \emph{parabolic basin} of 
$\mathbb O$, i.e., the open set
of points $x$ such that $f^n (x)\to \mathbb O$
as $n\to \infty$.
Observe that, as $f$ is invertible, $\mathcal B$ is connected. There exists a holomorphic submersion $\phi^\iota\colon \mathcal B \to \mathbb C$, called the (one dimensional)
\emph{incoming Fatou coordinate}, that semi-conjugates the dynamics of $f$
on $\mathcal B$ to a translation by $1$ on $\mathbb C$; i.e., we have 
\begin{equation}\label{e:ingoing}
\phi^\iota (f (x)) = \phi^\iota(x) +1 \quad 
\forall x \in \mathcal B.
\end{equation}

There also 
exists a second (open)
holomorphic map $\phi_2\colon \mathcal B\to \mathbb C$ such that the map
$\Phi = (\phi^\iota, \phi_2)\colon \mathcal B\to \mathbb C^2$ is a biholomorphism which satisfies $\Phi(f(x))=\Phi(x) + (1,0)$ for every $x\in \mathcal B$.
Given any $p \in \bcal$, the fiber $\{ q \in \bcal : \phi^\iota(p)=\phi^\iota(q) \}$ is called the 
\emph{strongly stable manifold} of $p$, denoted by $W^{ss}(p)$. It is an injectively immersed entire curve in $\C^2$, and it is characterized by the following property: 
$$q \in W^{ss}(p) \Longleftrightarrow \limn \frac{1}{n} \log d(f^n(p), f^n(q)) = \log |b|.$$

Let us now consider the set of points converging to $\mathbb O$ under the iteration of $f^{-1}$.
This set is an $f$-invariant 
complex curve $\Sigma\subset \mathbb C^2$, with $\mathbb O$ on its boundary. 
The \emph{Fatou parametrization} of $\Sigma$
is a holomorphic map $\phi^{o}\colon \mathbb C\to \mathbb C^2$ satisfying
\begin{equation}\label{e:outgoing}
f (\psi^o (y))= \psi^o(y+1) 
\quad
\forall y \in \mathbb C.
\end{equation}

\begin{defi}
The map
$\mathcal L_0 := \psi^o \circ \phi^\iota  \colon
\mathcal B \cap \Sigma \to \Sigma$ is the \emph{H\'enon-Lavaurs},
or \emph{transition map}
of $f$
(associated to the semi-parabolic point $\mathbb O$).
The map $H_f:= \phi^\iota \circ \psi^o \colon 
(\psi^o)^{-1} (\mathcal B\cap \Sigma)
\to \mathbb C$ is the \emph{lifted horn map} of $f$
(associated to the semi-parabolic point $\mathbb O$).
\end{defi}

The following 
properties of $H_f$ directly follow from its 
definition and the local description of $\mathcal B$ and $\Sigma$,
see \cite{BSU17semi,DL15stability}.
The last item is a
key point in the characterization of bifurcations in \cite{DL15stability} by means of homoclinic tangencies.

\begin{prop}\label{p:properties-H}
The following properties hold.

\begin{enumerate}	
\item The domain $\psi^o(\mathcal B\cap \Sigma)$
contains the set 
$\{|\Im z| >R\}$ for every $R$
large enough.

\item For every $z \in (\psi^o)^{-1}(\mathcal B)$ and 
 every $w \in \C$, we have   $H_f(z)=w$ if and only if $\psi^o(z)$ lies in the intersection of $\Sigma$ and the strongly stable manifold
	$\{(x,y) \in \mathcal B : \phi^\iota(x,y)=w\}$.

\item 
Given $z\in \mathbb C$,
	we have $H'_f(z)=0$ if and only if the  strongly stable manifold
	$\{(x,y) \in \mathcal B : \phi^\iota(x,y)=w\}$ is tangent to $\Sigma$ at $\psi^o(x)$.
\end{enumerate}
\end{prop}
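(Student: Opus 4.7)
The plan is to derive each item from the definitions, combined with the standard Leau-Fatou description of the one-dimensional parabolic dynamics of $f$ restricted to the invariant curve $\Sigma$.

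For (1), I would start from the fact that the restriction of $f$ to a neighbourhood of $\mathbb{O}$ in $\Sigma$ is a one-variable germ tangent to the identity of the form $z \mapsto z + z^2 + O(z^3)$. Classical Leau-Fatou theory applied to this germ produces an attracting petal $\mathcal{P}_a \subset \Sigma \cap \mathcal{B}$ and a repelling petal $\mathcal{P}_r \subset \Sigma$, each biholomorphic to a half-plane via its own Fatou coordinate, and overlapping along two sepals contained in $\mathcal{P}_a$. The outgoing Fatou parametrization $\psi^o$ extends the inverse of the repelling Fatou coordinate by means of the functional equation $\psi^o(y+1)=f(\psi^o(y))$; the standard asymptotic comparison between attracting and repelling Fatou coordinates then shows that $\psi^o$ sends each of the half-strips $\{\Im z > R\}$ and $\{\Im z < -R\}$ into one of the two sepals, once $R$ is large enough. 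Since the sepals lie in $\mathcal{P}_a \subset \mathcal{B}$, this proves $\{|\Im z| > R\} \subset (\psi^o)^{-1}(\mathcal{B}\cap\Sigma)$.

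For (2), I would use the characterisation recalled just before the proposition: the strongly stable manifold through $p \in \mathcal{B}$ is exactly the fiber $\{q \in \mathcal{B} : \phi^\iota(q) = \phi^\iota(p)\}$ of $\phi^\iota$. Since $H_f = \phi^\iota \circ \psi^o$ by definition and $\psi^o(z) \in \Sigma$ automatically, the equality $H_f(z)=w$ is equivalent to $\psi^o(z)$ belonging to $\Sigma \cap \{\phi^\iota = w\}$, which is the stated claim. For (3), differentiating $H_f = \phi^\iota \circ \psi^o$ at $z$ yields $H'_f(z) = D_{\psi^o(z)} \phi^\iota \cdot (\psi^o)'(z)$. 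The vector $(\psi^o)'(z)$ is non-vanishing (because $\psi^o$ semi-conjugates the translation to $f$ and $Df$ is invertible on $\C^2$), so it spans the tangent line $T_{\psi^o(z)}\Sigma$; on the other hand $\ker D_{\psi^o(z)} \phi^\iota$ is precisely the tangent space at $\psi^o(z)$ to the fiber $\{\phi^\iota = H_f(z)\}$, that is, to the strongly stable manifold through $\psi^o(z)$. Hence $H'_f(z) = 0$ if and only if $T_{\psi^o(z)} \Sigma \subset T_{\psi^o(z)} W^{ss}$, which is exactly the stated tangency condition.

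The only non-routine ingredient is the asymptotic description underlying (1), which relies on the classical parabolic implosion estimates comparing attracting and repelling Fatou coordinates on the sepals; the analogous two-dimensional statement is already available in the setting of \cite{BSU17semi}. Once it is granted, items (2) and (3) are formal consequences of the definitions of $H_f$ and of the strongly stable manifolds.
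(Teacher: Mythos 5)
The paper itself does not give a proof of this proposition; it states that the properties ``directly follow from its definition and the local description of $\mathcal B$ and $\Sigma$,'' citing \cite{BSU17semi,DL15stability}. Your argument supplies exactly the details that this one-line reference delegates, and all three items are handled correctly: (1) via the Leau--Fatou flower for the restriction of $f$ to $\Sigma$ and the standard fact that the repelling Fatou parametrization $\psi^o$ sends the half-strips $\{\Im z>R\}$, $\{\Im z<-R\}$ into the sepals inside the attracting petal $\mathcal P_a\subset \mathcal B\cap\Sigma$; (2) by unwinding $H_f=\phi^\iota\circ\psi^o$ together with the identification, recalled just before the proposition, of the strongly stable manifold through $p$ with the fiber $\{\phi^\iota=\phi^\iota(p)\}$; and (3) by the chain rule, using that $\ker D\phi^\iota$ is tangent to the strong stable fiber and $(\psi^o)'(z)$ spans $T_{\psi^o(z)}\Sigma$.

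One small point in (3) is worth tightening. The non-vanishing of $(\psi^o)'$ does not follow from the functional equation $f\circ\psi^o=\psi^o\circ\tau_1$ and the invertibility of $Df$ alone; that only shows that the zero set of $(\psi^o)'$ is $\tau_1$-invariant. You also need the fact, part of the construction in \cite{BSU17semi,Ueda86local1,Ueda91local2}, that $\psi^o$ restricted to a left half-plane $\{\Re z<-R\}$ is a biholomorphism onto a repelling petal of $\Sigma$, so $(\psi^o)'$ is non-vanishing there, and then the functional equation together with invertibility of $Df$ propagates the non-vanishing to all of $\C$. With that line added your argument is complete and matches the approach the paper implicitly intends.
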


The maps $\mathcal L_0$ and $H_f$
are conjugated to each other, so their dynamics 
are very similar. In particular, one can use the map $H_f:\mathbb C\to \mathbb C$
as a model for the map $\mathcal L_0$,
whose domain and image are in $\C^2$. 
It follows from \eqref{e:ingoing} and \eqref{e:outgoing}
that we have
\[
f\circ \mathcal L_0 = \mathcal L_0 \circ f \quad \mbox{ and }
\quad 
H_f \circ \tau_1 = \tau_1 \circ H_f,
\]
where we denote by $\tau_1$
 the translation by $1$ in $\mathbb C$.
In particular, 
$H_f$
induces
a map $h_f$ on (a subset of) the cylinder 
$\mathcal C$
obtained
by taking the quotient of $\mathbb C$ by the $\mathbb Z$-action of $\tau_1$.
By Proposition \ref{p:properties-H} (1), the domain 
of $h_f$ contains the two 
extremities of  $\mathcal C$.
By \cite{BSU17semi},
$h_f$
 extends to such extremities, that we can identify with $0$ and $\infty$ in $\rs$.
 In particular, we can see $h_f$ 
 as a map from an open subset of $\rs$ (containing two neighbourhoods of $0$ and $\infty$)
 to $\rs$.

 \begin{defi}\label{d:hf}
 The map $h_f$ is the \emph{horn map}
 of $f$ (associated to the semi-parabolic point $\mathbb O$).
 \end{defi}

The maps $\mathcal L_0$, $H_f$,
and $h_f$
are
deeply related to the so-called
(semi-)parabolic implosion phenomenon for the perturbations of the map $f$, see \cite{BSU17semi}
and \cite{lavaurs1989systemes,Shishikura00bifurcation} 
for their counterparts in one-dimensional parabolic dynamics. 
While we will not need
results in this direction in the proof 
of Theorem \ref{th:islandp}, 
we recall here what we will need in
the proof of Theorem \ref{th:maxdim}.

\medskip

Observe that
the map $\mathcal L_0$
is actually defined on $\mathcal B$. 
For every $\alpha\in \mathbb C$, define the \emph{Hénon-Lavaurs maps of phase $\alpha$}
$\mathcal L_\alpha\colon \mathcal B\to \mathbb C^2$ as
\[
\mathcal L_\alpha := \psi^o \circ \tau_\alpha\circ \phi^\iota,
\]
where $\tau_\alpha$ denotes the translation by $\alpha\in \mathbb C$ in $\mathbb C$.
Observe that the image of $\mathcal L_\alpha$ is contained in $\Sigma$, and that 
the definition of $\mathcal L_0$ is coherent with 
its previous definition above.

\medskip

For small $\epsilon$, 
we consider 
holomorphic perturbations $f_\epsilon$
of $f$ of the form
\[
f_\epsilon (z,w)
=
(z+z^2 + \epsilon^2 + O(z^2),
b_\epsilon w + O(zw)),
\]
where in particular
$b_\epsilon$ depends holomorphically in $\epsilon$.
Following \cite{BSU17semi}, we say that a sequence 
$(n_j, \epsilon_j)\subset (\mathbb N, \mathbb R^+)^{\mathbb N}$ is an 
\emph{$\alpha$-sequence} if $\epsilon_j\to 0$
and $n_j-\pi/\epsilon_j\to \alpha$. Observe that this condition implies that $n_j\to \infty$, and prescribes that the
convergence $\epsilon_j\to 0$ happens tangentially to the 
positive
real axis.

\begin{thm}
[Bedford-Smillie-Ueda \cite{BSU17semi}]\label{t:BSU}
    Let $(n_j,\epsilon_j)$
    be an \emph{$\alpha$-sequence}. Then, 
    \[
f^{n_j}_{\epsilon_j}\to \mathcal L_\alpha    
    \]
    locally uniformly in $\mathcal B$.
\end{thm}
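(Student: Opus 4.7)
The plan is to adapt the classical one-dimensional Douady--Lavaurs parabolic implosion theorem to the present two-dimensional semi-parabolic setting. The main new ingredient relative to the one-variable case is the contracting $w$-direction: one must build \emph{two-dimensional} Fatou coordinates for the perturbations and control the strong stable foliation uniformly as $\epsilon\to 0$. Throughout, I would exploit that the $z$-coordinate dynamics of $f_\epsilon$ has the normal form $z\mapsto z+z^2+\epsilon^2+O(z^3)$, so the one-variable theory governs the ``parabolic gate'' transit and the $w$-coordinate acts as a strong contraction that at each step folds everything onto a thin neighbourhood of the local unstable curve $\Sigma$.

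First, for $\epsilon>0$ small (real), I would construct perturbed incoming Fatou coordinates $\phi^\iota_\epsilon$ on a two-dimensional ``incoming petal'' that avoids a neighbourhood of the two fixed points $\simeq \pm i\epsilon$, satisfying $\phi^\iota_\epsilon\circ f_\epsilon=\phi^\iota_\epsilon+1$, and extended along the orbit to the whole basin $\mathcal B_\epsilon$ by using the dynamics. Symmetrically, construct outgoing Fatou parametrizations $\psi^o_\epsilon$ near $\Sigma_\epsilon$. Using the contraction in $w$, these extend to honest two-dimensional coordinates because every orbit eventually falls into a thin tubular neighbourhood of a local stable manifold of $\mathbb O$ (for $\phi^\iota_\epsilon$) or of $\Sigma$ (for $\psi^o_\epsilon$). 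Then I would verify that both families converge, locally uniformly on compacts of $\mathcal B$ (resp.\ of $\mathbb C$), to the unperturbed $\phi^\iota$ and $\psi^o$ as $\epsilon\to 0^+$.

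Second, the ``gate asymptotics'' step: a direct computation with the model $z\mapsto z+z^2+\epsilon^2$ shows that crossing the gate from the incoming to the outgoing petal requires exactly $\pi/\epsilon+O(1)$ iterations, with a precise correction measured in Fatou coordinates. More precisely, if $(z_k,w_k)=f_\epsilon^k(z_0,w_0)$ with $(z_0,w_0)$ in the incoming petal, then for $n=n_j$ with $n_j-\pi/\epsilon_j\to\alpha$, one has
\[
\psi^{o,-1}_{\epsilon_j}\bigl(f_{\epsilon_j}^{n_j}(z_0,w_0)\bigr)\;=\;\phi^\iota_{\epsilon_j}(z_0,w_0)+\bigl(n_j-\pi/\epsilon_j\bigr)+o(1),
\]
so after composing with $\psi^o_{\epsilon_j}$ and using the convergences of Step~1, we deduce $f_{\epsilon_j}^{n_j}(z_0,w_0)\to \psi^o\circ\tau_\alpha\circ\phi^\iota(z_0,w_0)=\mathcal L_\alpha(z_0,w_0)$.

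Finally, one must propagate the pointwise/local convergence to locally uniform convergence on all of $\mathcal B$. This is done by noting that $\mathcal B$ is exhausted by its compact subsets, and that if $K\Subset\mathcal B$ then $f^N(K)$ lies in an arbitrarily small bidisk around $\mathbb O$ for $N$ large; applying the gate-transit estimate to $f_{\epsilon_j}^{n_j-N}\circ f_{\epsilon_j}^N$ and using uniform convergence $f_{\epsilon_j}^N\to f^N$ on $K$ reduces the problem to the incoming petal, where Step~1 applies. The main obstacle is the uniform control of the two-dimensional Fatou coordinates and the associated strong stable foliations: one needs these objects to depend continuously on $\epsilon$ up to $\epsilon=0$ despite the bifurcation of the fixed point, and the natural tool is a graph-transform / $\lambda$-lemma argument along the pseudo-orbits that traverse the gate, combined with the hyperbolicity in the $w$-direction provided by $|b_\epsilon|<1$.
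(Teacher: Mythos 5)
Note first that the paper does not prove Theorem~\ref{t:BSU}: it is imported as a black box from Bedford--Smillie--Ueda \cite{BSU17semi}, so there is no ``paper's proof'' for your argument to be compared against. That said, your sketch reproduces the overall strategy of \cite{BSU17semi} fairly faithfully: build $\epsilon$-dependent incoming Fatou coordinates $\phi^\iota_\epsilon$ on a two-dimensional petal and an outgoing parametrization $\psi^o_\epsilon$ of the (one-dimensional) escaping curve, prove that they depend continuously on $\epsilon$ up to $\epsilon=0$ and converge to $\phi^\iota$, $\psi^o$, derive the gate-transit asymptotics $n-\pi/\epsilon+o(1)$ from the one-variable model in the $z$-coordinate, and propagate locally uniform convergence on $\mathcal B$ by pushing a compact subset into the incoming petal. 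One small precision: since $\psi^o_\epsilon$ parametrizes a \emph{curve}, the expression $\psi^{o,-1}_{\epsilon_j}\bigl(f_{\epsilon_j}^{n_j}(z_0,w_0)\bigr)$ is not literally defined for a generic point; you must first project $f_{\epsilon_j}^{n_j}(z_0,w_0)$ onto that curve along the strong stable foliation, and the point is that after $\approx\pi/\epsilon$ iterations the $w$-contraction makes the orbit exponentially close to the curve, so this projection costs only $o(1)$. You also correctly identify where the genuine difficulty sits, namely the uniform-in-$\epsilon$ construction and control of the two-dimensional Fatou-type coordinates and of the strong stable foliation across the gate, despite the bifurcation of the fixed point; this is exactly where the bulk of the technical work in \cite{BSU17semi} is concentrated, and in your write-up it remains at the level of a plan (a graph-transform or $\lambda$-lemma argument is named but not carried out). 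As a sketch of a theorem the paper merely cites, your outline is aligned with the reference, but it is not a proof.
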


\section{Proof of Theorem \ref{th:islandp}}

We continue to use the notation of the previous sections.
Let $R>0$ be large enough 
so that the filtration property
in Section \ref{ss:filtration}
holds for $f$ and $R$.
We will only be 
concerned with the dynamics of $f$ inside $D_R$, where we recall that $f$ can be seen as an invertible horizontal-like map.
Fix 
$z_0 \in \C$ and
denote by $S(z_0)$ 
a 
connected component of $D_R \cap \{\phi^\iota(x,y)=z_0\}$.
We also let $D$ be a small disk in $\Sigma$ intersecting the boundary of $\mathcal B$. As we will see, the proof 
of Theorem
\ref{th:islandp} will essentially consist
in finding suitable
intersections
between preimages of $S(z_0)$ and images of $D$.

\medskip

In order to find such intersections, 
let us fix  
an arbitrary Pesin box $P$ for $\mu$,
see Section \ref{ss:pesin-boxes}.
 Recall that we denote by
 $N$
 a given neighbourhood
of $P$, where the stable/unstable manifolds associated to $P$ can be thought as vertical/horizontal, see Section \ref{ss:pesin-boxes}.

\medskip

We first consider the preimages of $S(z_0)$.
Here is where the semi-local 
setting of horizontal-like maps will turn out to be useful, as $S(z_0)$ does not a priori satisfy the conditions for the convergence \ref{eq:conv-henon}.

\begin{lem}\label{lem:conv-vertical-T+}
	We have $d^{-n} (f^n)^* [S(z_0)] \to T^+$
 on $D_R$.
\end{lem}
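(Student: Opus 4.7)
Proof plan for Lemma 3.1.

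My strategy is to invoke the local equidistribution result \eqref{eq:conv-hlm} for the horizontal-like map $\tilde f$ on $D_R$, applied to the positive closed current $[S(z_0)]$. To do this I need three ingredients: that $[S(z_0)]$ is a well-defined vertical current in $D_R$ of finite mass, identification of the limit with $\tilde T^+ = T^+|_{D_R}$, and a determination of the vertical mass.

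First I would verify that $S(z_0)$ is a vertical subset of $D_R$. Since $S(z_0) \subset \{\phi^\iota = z_0\} \subset \mathcal B \subset K^+$, it suffices to check that $K^+ \cap D_R$ is vertical, i.e.\ disjoint from the vertical boundary $\partial \mathbb D(0,R) \times \mathbb D(0,R)$. This follows from the filtration in Lemma \ref{lem:filtration}: a point $(z,w)$ with $|z|=R$ and $|w|<R$ is mapped by $f$ to $(p(z)+aw, z)$, which for $R$ large lies in $V_R^+$ (since $|p(z)+aw| \sim R^d \gg R = |z|$), and $V_R^+ \cap K^+ = \emptyset$. Hence such a point is not in $K^+$, so $K^+ \cap D_R$ is vertical, and the same holds for its subset $S(z_0)$. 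Being a closed complex $1$-dimensional subvariety of an open subset of $D_R$ with closure avoiding the vertical boundary, $[S(z_0)]$ is a vertical positive closed current of finite mass (the projection of $S(z_0)$ onto the $z$-coordinate is proper over $\mathbb D(0,R)$, so the mass is bounded by a multiple of this projection degree).

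Second, I would apply \eqref{eq:conv-hlm} to $R := [S(z_0)]$, which yields
\[
d^{-n} \tilde f^* [S(z_0)] \longrightarrow c_{S(z_0)}\, \tilde T^+
\]
on $D_R$, where $c_{S(z_0)} \geq 0$ is the vertical mass, computed as the mass of the restriction of $[S(z_0)]$ to a generic horizontal slice $\{w = w_0\} \cap D_R$, i.e.\ the number of intersection points (counted with multiplicity). To see that $c_{S(z_0)}>0$, I would argue that $S(z_0)$ is an unbounded irreducible piece of the strongly stable manifold $W^{ss}(p)$ (for any $p$ with $\phi^\iota(p)=z_0$): being contained in the entire curve $W^{ss}(p) \subset \mathbb C^2$ and a full connected component of its intersection with $D_R$, it must meet a generic horizontal slice. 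Combined with $\tilde T^+ = T^+|_{D_R}$, this yields the convergence $d^{-n}(f^n)^*[S(z_0)] \to c\, T^+$ on $D_R$ with $c>0$; the normalization $c=1$ (as stated in the lemma) corresponds to the fact that $S(z_0)$ is a single connected sheet of the strong stable foliation and hence its generic horizontal intersection number equals $1$ (alternatively, this constant is immaterial for the subsequent application of the lemma in Theorem \ref{th:islandp}, where only the positivity and the identification of the limiting current up to scale matter).

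The main obstacle I anticipate is the verticality of $S(z_0)$. The filtration argument above handles it cleanly, but one must be careful because $S(z_0)$ is only defined as a component of a level set of $\phi^\iota$, not globally as a nice submanifold of $\mathbb C^2$; the containment in $K^+$ and the filtration trick are what make it work. Once verticality is established, \eqref{eq:conv-hlm} is essentially a plug-and-play citation from the cited literature.
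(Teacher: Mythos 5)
Your proposal matches the paper's argument: establish that $S(z_0)$ is a vertical analytic set in $D_R$ with slice mass one, using $S(z_0) \subset K^+$ together with the filtration $V_R^+ \cap K^+ = \emptyset$, and then invoke \eqref{eq:conv-hlm}. One minor slip: for a vertical subset of $D_R$ (whose closure avoids $\partial\mathbb{D}(0,R)\times\mathbb{D}(0,R)$), it is the projection to the \emph{second} coordinate $w$, not to the first coordinate $z$, that is proper over $\mathbb{D}(0,R)$ and gives the finite degree.
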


\begin{proof}
Observe that,
up to choosing $R>0$ large enough,
we have $S(z_0) \subset K^+$,
which implies that
$S(z_0) \cap V_R^+=\emptyset$. Hence,
$S(z_0)$
is a vertical analytic set in $D_R$.
Moreover, we have $[S(z_0)]\wedge [L]=1$ for every horizontal line $L$ in $D_R$.
	The assertion is then 
 a consequence of \eqref{eq:conv-hlm}.
\end{proof}

\begin{lem}\label{lem:intersectP}
	For $\mu$-a.e.\ $x_1 \in P$, there exists $n_1 \in \N$ such that $f^{-n_1}(S(z_0))$ intersects $W^u(x_1)$  transversally  at some $y_1 \in P$.
\end{lem}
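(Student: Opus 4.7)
The plan is to combine the equidistribution of Lemma \ref{lem:conv-vertical-T+} with the laminar structure of $T^+$ along the Pesin box $P$ to convert a convergence of currents into an actual transverse geometric intersection.

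First, I would settle the geometric picture inside the bidisk neighborhood $N=N(P)$ from Section \ref{ss:pesin-boxes}. There, the local unstable manifolds $W^u_N(x)$ for $x\in P$ form a measurable lamination by horizontal graphs, and $P$ is homeomorphic to the product $P^s\times P^u$ via its local product structure. On the other hand, since $\mathcal B\subset K^+$ and $K^+$ is fully $f$-invariant, $S(z_0)$ and all its preimages $f^{-n}(S(z_0))$ lie in $K^+$, which for $R$ large is disjoint from $V^+_R$ by Lemma \ref{lem:filtration}. Hence each $f^{-n}(S(z_0))$ is a vertical analytic set in $D_R$, and any intersection of it with a horizontal graph $W^u_N(x)$ is automatically transverse. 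It therefore remains only to produce, for $\mu$-a.e.\ $x_1\in P$, one such intersection point lying in $P$.

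To do this, I would exploit the disintegration of $\mu=T^+\wedge T^-$ along the unstable lamination of $P$: by the laminarity of $T^+$ established in \cite{BLS93measure,Duj04henon}, for $\mu$-a.e.\ $x\in P$ the slice $\sigma^u_x:=T^+\wedge[W^u_N(x)]$ is a non-zero Radon measure supported on $W^u_N(x)\cap P$, with $x$ in its support. Lemma \ref{lem:conv-vertical-T+} gives the convergence of the vertical currents $d^{-n}[f^{-n}(S(z_0))]$ to $T^+$ in $D_R$, and since these are geometric vertical currents of unit transverse mass (namely, $[S(z_0)]\wedge[L]=1$ for every horizontal line $L\subset D_R$), the slicing theory for vertical laminar currents in \cite{Duj04henon,DNS08dynamics} allows this convergence to descend to slices on $\mu$-a.e.\ unstable leaf: for $\mu$-a.e.\ $x_1\in P$, one has
\[
d^{-n}\,[f^{-n}(S(z_0))]\wedge[W^u_N(x_1)] \;\longrightarrow\; \sigma^u_{x_1}
\]
weakly as measures on $W^u_N(x_1)$. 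Fixing $x_1$ in the intersection of the two full-measure subsets above, every neighborhood of $x_1$ in $\mathrm{supp}(\sigma^u_{x_1})\subset W^u_N(x_1)\cap P$ is eventually charged, so for some $n_1$ large enough $f^{-n_1}(S(z_0))\cap W^u_N(x_1)$ contains a point $y_1$ arbitrarily close to $x_1$. By the first paragraph this intersection is transverse, and the inclusion $\mathrm{supp}(\sigma^u_{x_1})\subset P$ together with the local product structure of $P$ lets one arrange $y_1\in P$.

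The main obstacle I expect is precisely the passage from weak convergence of $(1,1)$-currents to convergence of the induced slices on an individual unstable leaf: this fails for general currents and relies essentially on the fact that the approximants $d^{-n}[f^{-n}(S(z_0))]$ are \emph{geometric} vertical currents with controlled transverse mass, which is exactly the setting covered by the laminar intersection theory of \cite{Duj04henon,DNS08dynamics}. The remaining steps (selection of $n_1$ once $x_1$ is fixed, and ensuring that $y_1$ lands in $P$ rather than merely in $N$) are then routine.
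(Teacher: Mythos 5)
Your proposal is correct and takes essentially the same route as the paper: the paper simply cites \cite[Lemma 9.1]{BLS93measure} after noting that its proof only requires the equidistribution $d^{-n}[f^{-n}\Delta]\to T^+$ (which Lemma~\ref{lem:conv-vertical-T+} supplies for $S(z_0)$), and your argument is precisely an unwinding of that cited lemma using the verticality of the preimages in $D_R$, the laminar/product structure of $T^+$ in the Pesin box, and the descent of the current convergence to slices on a.e.\ unstable leaf. You correctly isolate Lemma~\ref{lem:conv-vertical-T+} as the key new input, and your flagging of the weak-convergence-to-slice-convergence passage as the delicate point matches exactly the technology from \cite{BLS93measure,Duj04henon} that the paper implicitly invokes.
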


\begin{proof}
In the case where $S(z_0)$ is 	
replaced by a disc $\Delta$
in the stable manifold of a saddle point, the assertion follows from \cite[Lemma 9.1]{BLS93measure}.
Since the proof only uses that $\Delta$ satisfies $d^{-n} [f^{-n} \Delta]\to T^+$, the same proof applies here thanks to Lemma \ref{lem:conv-vertical-T+}.
\end{proof}

For any $r>0$, let $\mathcal S(r)$ denote the connected component of 
$\{(x,y) \in D_R:  \phi^\iota(x,y) \in \D(z_0,r)  \}$
which contains $S(z_0)$.
For any $z \in \D(z_0,r)$, we let $S(z)$ denote the connected component of $\{\phi^\iota(x,y)=z\}$ contained in $\mathcal S(r)$. 

\medskip

By Lemma \ref{lem:intersectP}, there exist
$n_1 \in \N$ and $x_1 \in P$ such that $f^{-n_1}(S(z_0))$ intersects transversally $W^u_N(x_1)$ in $P$. By the description of the Pesin boxes in Section \ref{ss:pesin-boxes}
and continuity, we get the following result.

\begin{lem}
\label{def:r0}
For every $z_0\in \mathbb C$
there exists
$r(z_0)>0$ such  that 
	\begin{enumerate}
		\item the connected component $U$ of $\mathcal S (r(z_0)) \cap W^u_N (x_1)$ containing $y_1$
  is simply connected and relatively compact in $N$; and
		\item	for all $z \in \D(z_0,r(z_0))$,
	$f^{-n_1}(S(z))$  intersects  $W^u_N(x_1)$  transversally at a point in $U$.
	\end{enumerate}
\end{lem}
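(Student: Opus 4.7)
My plan is to apply the inverse function theorem to the holomorphic function $g := \phi^\iota \circ f^{n_1}$ restricted to $W^u_N(x_1)$, exploiting the transversality hypothesis from Lemma \ref{lem:intersectP} to ensure that $g$ is a local biholomorphism near $y_1$, and then to use the corresponding inverse branch to produce $U$.

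First, I would verify that $g$ is well-defined and holomorphic in a neighborhood of $y_1$ in $W^u_N(x_1)$. Since $f^{n_1}(y_1)\in S(z_0)\subset\mathcal{B}$ and $\mathcal{B}$ is backward invariant under $f$ (as the basin of attraction of the fixed point $\mathbb{O}$), we get $y_1\in\mathcal{B}$, so $g(y_1)=z_0$ is defined, and $g$ is holomorphic on the relatively open subset $W^u_N(x_1)\cap f^{-n_1}(\mathcal{B})$, which contains $y_1$.

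Next, I would convert the transversality hypothesis into a non-degeneracy statement for $g$. The curve $f^{-n_1}(S(z_0))$ coincides locally near $y_1$ with the level set $\{g=z_0\}$ in $\C^2$; its tangent line at $y_1$ is $\ker dg(y_1)$. Transversality with $W^u_N(x_1)$ at $y_1$ is thus equivalent to $T_{y_1}W^u_N(x_1)\oplus\ker dg(y_1)=T_{y_1}\C^2$, i.e.\ to $dg(y_1)|_{T_{y_1}W^u_N(x_1)}\neq 0$. Consequently $g|_{W^u_N(x_1)}$ is a local biholomorphism at $y_1$, and the inverse function theorem yields some $r(z_0)>0$ and a simply connected neighborhood $V\subset W^u_N(x_1)$ of $y_1$ such that $g|_V:V\to\D(z_0,r(z_0))$ is a biholomorphism; shrinking $r(z_0)$ if necessary, we also arrange $V\Subset N$.

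I then take $U:=V$. For (1), one checks that $V$ coincides with the connected component of $\mathcal{S}(r(z_0))\cap W^u_N(x_1)$ containing $y_1$, so simple connectedness and relative compactness in $N$ are immediate. For (2), given $z\in\D(z_0,r(z_0))$, let $p:=(g|_V)^{-1}(z)\in V$: then $\phi^\iota(f^{n_1}(p))=z$, so $f^{n_1}(p)$ lies in the level set $\{\phi^\iota=z\}$ and, by continuity for $r(z_0)$ small, in the connected component $S(z)\subset\mathcal{S}(r(z_0))$. Hence $p\in f^{-n_1}(S(z))\cap W^u_N(x_1)$, and the intersection is transversal thanks to $g'(p)\neq 0$ on $V$.

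The main technical obstacle is the identification in (1): showing that the local $V$ really coincides with the full connected component of $\mathcal{S}(r(z_0))\cap W^u_N(x_1)$ containing $y_1$, rather than being only a proper subset. This requires excluding that $W^u_N(x_1)$ re-enters $\mathcal{S}(r(z_0))$ outside $V$ and then re-connects to $V$ inside $\mathcal{S}(r(z_0))$; this is handled by shrinking $r(z_0)$ further, using the continuity of $g$, the compactness of the portion of $\overline{W^u_N(x_1)}$ outside $V$, and the fact that for small $r$ the set $\mathcal{S}(r)$ is a thin tubular neighborhood of $S(z_0)$.
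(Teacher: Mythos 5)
Your proof is correct and takes essentially the same approach the paper has in mind: the paper's own argument is only the one-line remark ``by the description of the Pesin boxes and continuity,'' and your application of the inverse function theorem to $g=\phi^\iota\circ f^{n_1}|_{W^u_N(x_1)}$, using the transversality from Lemma~\ref{lem:intersectP} to get $g'(y_1)\neq 0$, is exactly how that continuity argument is made precise. One small caveat that you have tacitly (and correctly) corrected: as stated, the lemma says $U$ is a component of $\mathcal S(r(z_0))\cap W^u_N(x_1)$, but since $\phi^\iota(y_1)=z_0-n_1$, the point $y_1$ does not lie in $\mathcal S(r(z_0))$ when $n_1\geq 1$; the intended set is $f^{-n_1}(\mathcal S(r(z_0)))\cap W^u_N(x_1)$, which is precisely what your $V=(g|_{W^u_N(x_1)})^{-1}(\D(z_0,r(z_0)))$ addresses. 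Your acknowledged ``technical obstacle'' (that $V$ is the full connected component, not a proper subset) is a genuine subtlety the paper glosses over, and the resolution you sketch---shrinking $r(z_0)$ so that, on a relatively compact portion of $W^u_N(x_1)$, the sublevel set $\{|g-z_0|<r(z_0)\}$ splits into disjoint small pieces near each preimage of $z_0$---is the right idea and completes the argument.
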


We now prove a result analogous to Lemma \ref{lem:intersectP} 
for a disk in $\Sigma$ intersecting the boundary of $\mathcal B$.
We first need the following analogous of Lemma \ref{lem:conv-vertical-T+}.

\begin{lem}\label{lem:conv-t-}
	Let $D \subset \Sigma$ be a small disk which intersects $\partial \mathcal B$.
	Then
 \begin{enumerate}
 \item ${T^{+}}_{|D}>0$, and
 \item $d^{-n} (f^n)_*[D] \to c_D T^-$, for some positive constant $c_D$.
 \end{enumerate}
\end{lem}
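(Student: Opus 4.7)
The plan is to establish (1) first and then derive (2) from it, together with the analog of \eqref{eq:conv-henon} for $T^-$.

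For (2), I check that $D \subset J^-$ and then apply the equidistribution. Since the $f^{-1}$-orbits of points of $\Sigma$ converge to $\mathbb O$, we have $\Sigma \subset K^-$. Near $\mathbb O$ and transversally to $\Sigma$, the dynamics of $f$ is contracting with multiplier $b$, $|b|<1$; hence $f^{-1}$ is expanding with multiplier $1/b$, and any point off $\Sigma$ in a sufficiently small neighborhood of $\mathbb O$ has an $f^{-1}$-orbit whose strongly stable component grows geometrically and leaves every bounded set, so that it does not belong to $K^-$. Thus $\Sigma$ lies in $\partial K^- = J^-$ near $\mathbb O$, and the $f$-invariance of $\Sigma$ and $J^-$ propagates this to all of $\Sigma \subset J^-$. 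The analog of \eqref{eq:conv-henon} for $T^-$ applied to $D \subset J^-$ then gives $d^{-n}(f^n)_*[D] \to c_D T^-$ with $c_D \geq 0$ and $c_D>0$ precisely when $T^+|_D > 0$, which is (1).

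For (1), my plan is to lift the problem to the Fatou parametrization. Set $g := G^+ \circ \psi^o \colon \C \to \R_{\geq 0}$, a non-negative subharmonic function on $\C$. From $G^+ \circ f = d\cdot G^+$ and $\psi^o \circ \tau_1 = f \circ \psi^o$ one obtains the functional equation $g \circ \tau_1 = d\cdot g$. By Proposition \ref{p:properties-H}(1), the open set $U := (\psi^o)^{-1}(\mathcal B)$, on which $g$ vanishes, contains $\{|\im z|>R\}$ for all $R$ large enough. I would also record that $g \not\equiv 0$: otherwise $\Sigma \subset K^+$, and combined with the filtration property this would force the non-constant entire map $\psi^o \colon \C \to \C^2$ to be bounded, contradicting Liouville. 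Let $\tilde D := (\psi^o)^{-1}(D)$. The inequality $T^+|_D > 0$ is then equivalent, via $\psi^o$, to $g|_{\tilde D}$ not being harmonic. I would then argue by contradiction: if $g|_{\tilde D}$ were harmonic, then since $g \equiv 0$ on the non-empty open set $\tilde D \cap U$, the identity theorem for harmonic functions would yield $g \equiv 0$ on $\tilde D$, and the functional equation would propagate this to $g \equiv 0$ on the $\tau_1$-orbit $\tilde D + \Z$.

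The main obstacle is to derive a contradiction with $g \not\equiv 0$ from this partial vanishing. One is left with a non-negative subharmonic function on $\C$ which vanishes on $(\tilde D + \Z) \cup \{|\im z|>R\}$ and satisfies the exponential growth law $g(z+n) = d^n g(z)$. The natural tool is a Phragm\'en--Lindel\"of-type argument in the horizontal strip $\{|\im z|<R\}$, whose boundary, together with the periodic family of disks, lies in $\{g=0\}$; however, a direct application would require $\log d < \pi/(2R)$, which need not hold since $R$ is forced to be large by Proposition \ref{p:properties-H}(1). To overcome this, I would exploit the additional structure of $g$ beyond subharmonicity, namely that it is the restriction to the entire curve $\Sigma$ of the Green function of a dissipative H\'enon map; in particular, the vanishing on the periodic family of disks, iterated by $\tau_1$, covers a positive proportion of each fundamental horizontal slab, and I expect this to allow a covering or harmonic-measure argument that forces $g \equiv 0$ globally and yields the desired contradiction. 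This final step, rather than the symbolic reductions above, is where the bulk of the work lies.
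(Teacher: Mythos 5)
Your treatment of item (2) is essentially the paper's: once (1) is known, $D\subset\Sigma\subset J^-$ and the analogue of \eqref{eq:conv-henon} for $T^-$ gives the conclusion with $c_D>0$.

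For item (1), however, there is a genuine gap, and you acknowledge it yourself. After lifting to $g=G^+\circ\psi^o$ and reducing (correctly) to showing that $g$ does not vanish identically on $\tilde D=(\psi^o)^{-1}(D)$, you attempt to derive a contradiction from \emph{global} growth properties of $g$ (the relation $g\circ\tau_1=d\cdot g$ together with vanishing on $\{|\im z|>R\}\cup(\tilde D+\Z)$) via a Phragm\'en--Lindel\"of type argument. You then observe that a direct Phragm\'en--Lindel\"of comparison would require $\log d<\pi/(2R)$, which fails since $R$ must be large, and you substitute for the missing step the hope that ``a covering or harmonic-measure argument'' will close the gap. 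This is precisely the part where the proof is not done, and I do not see how to complete it along those lines: a priori a non-negative subharmonic function can vanish on $\{|\im z|>R\}$ and on a $\Z$-periodic family of disks without vanishing everywhere, and the functional equation alone does not obviously rule this out.

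The paper avoids the global detour entirely. What you are missing is a short \emph{dynamical} normality argument that proves the local non-vanishing $G^+|_D\not\equiv 0$ directly. Suppose $G^+\equiv 0$ on $D$. Since also $D\subset\Sigma\subset K^-$, this gives $D\subset K^+\cap K^-$, hence $f^n(D)\subset D_R$ for all $n$, so $\{f^n|_D\}$ is a normal family. But $D$ contains a non-empty open subset of $\mathcal B$, on which $f^n\to\mathbb O$; by normality, every locally uniform limit of $(f^n|_D)$ is then constantly $\mathbb O$, forcing $D\subset\mathcal B$ and contradicting $D\cap\partial\mathcal B\neq\emptyset$. Once $G^+|_D$ is a non-constant, non-negative subharmonic function attaining its minimum value $0$ at an interior point $x\in\partial\mathcal B\cap D$, it cannot be harmonic on $D$ (minimum principle), so $dd^cG^+=T^+$ is a non-trivial positive measure on $D$. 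This is simpler and, more importantly, it fills exactly the step you identify as the crux. Note also that your reduction, phrased as ``the identity theorem would yield $g\equiv0$ on $\tilde D$,'' already lands you on the statement $G^+|_D\equiv0$; the point is that this should be refuted \emph{locally} by normality, not by a global growth estimate on $g$.
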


\begin{proof}
	By assumption, there exists $x \in \partial \mathcal B \cap D$, so that $G^+(x)=0$.
	Let us prove that there exists $x_1 \in D$ such that $G^+(x_1)>0$.
	Assume, by contradiction, that $G^+\equiv 0$ on $D$. Then,
 we have 
 $D \subset K^+ \cap K^-$. In particular, we have
  $f^n(D) \subset D_R$
for all $n \in \N$. 
Therefore, $\{f^n : D \to \C^2\}$ is a normal family. 
As there is an open subset of $D$ contained in $\mathcal B$, any limit of
any subsequence
$f_{|D}^{n_k}$ must be constantly equal to the semi-parabolic point. This implies that
$D  \subset \mathcal B$ and gives a contradiction with the assumption that $D\cap \partial \mathcal B \neq \emptyset$.

	Therefore, $G^+ : D \to \R$ is not constantly equal to $0$. On the other hand, it is a non-constant subharmonic function on $D$
	which admits a local minimum at $x \in D$. It follows  by the maximum principle that $G^+$
  cannot be harmonic on $D$, which
	 means that $dd^c G^+>0$ on $D$. This proves the first assertion.

By assumption, we have
$D \subset \Sigma \subset J^-$. Applying
 \eqref{eq:conv-henon}
(with $f^{-1}$ instead of $f$),
by the first item
 we have  $d^{-n} (f^n)_*[D] \to c_D T^-$, for some $c_D>0$. The assertion follows.
\end{proof}

\begin{lem}\label{lem:intersectionP-D}
	For $\mu$-a.e.\ $x_2 \in P$, there exists $n_2 \in \N$ such that $f^{n_2}(D)$ intersects transversally $W^s_N(x_2)$ at some $y_2$ in $N$.
\end{lem}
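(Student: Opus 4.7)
The plan is to mirror the proof of Lemma \ref{lem:intersectP}, now interchanging the roles of forward and backward iteration, of $T^+$ and $T^-$, and of the stable and unstable manifolds inside the Pesin box $P$. The statement sought is the exact dual of Lemma \ref{lem:intersectP} under the time-reversal symmetry $f \leftrightarrow f^{-1}$, so it is natural to re-use the same machinery.

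First, I would recall that the proof of Lemma \ref{lem:intersectP} rests on only two ingredients: the convergence $d^{-n}(f^n)^*[S(z_0)] \to T^+$ given by Lemma \ref{lem:conv-vertical-T+}, together with the standard description of $T^+$ inside $P$ as an integration current along the nearly vertical local stable laminations. These two facts are then fed into \cite[Lemma 9.1]{BLS93measure} to conclude that backward iterates of $S(z_0)$ meet $W^u_N(x_1)$ transversally for $\mu$-a.e.\ $x_1\in P$. In the present setting, the dual of the first ingredient is supplied by Lemma \ref{lem:conv-t-}, namely $d^{-n}(f^n)_*[D] \to c_D T^-$ with $c_D>0$, while the dual of the second ingredient is the analogous description of $T^-$ inside $P$ as the integration current along the nearly horizontal local unstable manifolds.

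I would then apply the analog of \cite[Lemma 9.1]{BLS93measure} obtained by swapping $(f,T^+,W^u)$ with $(f^{-1},T^-,W^s)$, and replacing the vertical slice $S(z_0)$ with the disk $D\subset \Sigma$. Geometrically, the convergence from Lemma \ref{lem:conv-t-} forces the currents $d^{-n}(f^n)_*[D]$, restricted to $N$, to align with the horizontal unstable lamination; since every (vertical) local stable disk $W^s_N(x_2)$ generically captures positive $T^-$-mass, it must intersect some forward iterate $f^{n_2}(D)$, and this intersection is transverse by the transversality of the stable and unstable directions in $N$. One has to argue that this occurs for $\mu$-a.e. $x_2 \in P$; this is exactly the conclusion of the cited lemma applied in this dual form.

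The only step that requires genuine verification is checking that the argument of \cite[Lemma 9.1]{BLS93measure} is formally symmetric under the exchange $f \leftrightarrow f^{-1}$. This is the main (and in fact essentially the only) potential obstacle. However, since $\mu = T^+\wedge T^-$ is $f$-invariant and symmetric in $(T^+,T^-)$, and since the Pesin box $P$ and its neighborhood $N$ were constructed with a symmetric role of the stable and unstable manifolds (as in Section \ref{ss:pesin-boxes}), this verification is routine. Once it is performed, the lemma follows with no further difficulty.
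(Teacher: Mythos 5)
Your proposal is correct and takes essentially the same approach as the paper: the paper likewise invokes Lemma \ref{lem:conv-t-} to supply the convergence $d^{-n}(f^n)_*[D]\to c_D T^-$ and then applies the argument of \cite[Lemma 9.1]{BLS93measure} in the time-reversed form, exactly as you describe.
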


\begin{proof}
	As in Lemma \ref{lem:intersectP}, 
 thanks to Lemma \ref{lem:conv-t-}
the proof is the same as that of \cite[Lemma 9.1]{BLS93measure}.
\end{proof}

The following lemma gives the desired 
tranverse 
intersections between preimages of disks in stable manifolds in $\mathcal B$ and images of disks in $\Sigma$ intersecting the boundary of $\mathcal B$.

\begin{lem}\label{lem:inclination}
	There are
 two
 sequences of integers $N_j, M_j \to +\infty$ such that the following properties hold.
 \begin{enumerate}
	\item $f^{-N_j}(U) \subset N$ for all $j$;
 \item 
$\mathrm{diam}(f^{-N_j}(U)) \to 0$ as $j \to +\infty$;
 \item 
	for all $z \in \D(z_0,r(z_0))$,
 the connected component of $f^{-n_1-N_j}(S(z))\cap N$ intersecting $f^{-N_j}(U)$
 is a vertical graph in $N$;
 \item $f^{M_j} (y_2)\in N$ for all $j$;
 \item 
the connected component of  $f^{n_2+M_j}(D) \cap N$
containing $f^{M_j} (y_2)$
is a horizontal graph in $N$.
\end{enumerate}
In particular, 
for all $z \in \D(z_0,r(z_0))$, $f^{-n_1-N_j}(S(z))$
intersects 
 $f^{n_2+M_j}(D)$ 
 transversally at a point in $N$.
\end{lem}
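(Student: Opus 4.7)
The plan is to combine Poincaré recurrence with the inclination ($\lambda$-lemma) type statement from Pesin theory, applied twice: once in backward time to propagate $U\subset W^u_N(x_1)$, and once in forward time to propagate $y_2\in W^s_N(x_2)$.

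First, since $\mu$ is $f$-invariant and of full support on $\mathrm{supp}(\mu)$, Poincaré recurrence applied to the point $x_1$ (which, up to choosing it in a full $\mu$-measure subset of $P$ with return time well-defined) yields a sequence $N_j\to\infty$ with $f^{-N_j}(x_1)\in P\subset N$. The set $U$ lies in the local unstable manifold $W^u_N(x_1)$, so $f^{-N_j}(U)\subset W^u(f^{-N_j}(x_1))$ and, by Pesin's estimates (the unstable Lyapunov exponent $\lambda^+>0$), the diameter of $f^{-N_j}(U)$ decays at rate $e^{-N_j\lambda^+}$ up to subexponential corrections; hence $\mathrm{diam}(f^{-N_j}(U))\to 0$ and, for $j$ large, $f^{-N_j}(U)\subset N$. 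This gives (1) and (2). Symmetrically, applying Poincaré recurrence to $x_2$ in forward time produces $M_j\to\infty$ with $f^{M_j}(x_2)\in P$; since $y_2\in W^s_N(x_2)$, the stable estimate $d(f^{M_j}(y_2),f^{M_j}(x_2))\to 0$ places $f^{M_j}(y_2)\in N$ for large $j$, giving (4).

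Next, for (3) and (5), I would invoke the inclination lemma in its Pesin version (as in \cite{BLS93measure,Duj04henon}). For (3): by Lemma~\ref{def:r0}, for every $z\in\mathbb{D}(z_0,r(z_0))$, the curve $f^{-n_1}(S(z))$ meets $W^u_N(x_1)$ transversally at a point in $U$, so it is transverse to the unstable direction there. Pulling back by $f^{-N_j}$, the direction tangent to $f^{-n_1-N_j}(S(z))$ at $f^{-N_j}(U)$ gets multiplied by $Df^{-N_j}$, which exponentially squeezes the transverse component onto the stable direction at $f^{-N_j}(x_1)\in P$. Hence the component of $f^{-n_1-N_j}(S(z))\cap N$ through $f^{-N_j}(U)$ becomes a vertical graph in $N$ for all large $j$, uniformly in $z\in\mathbb{D}(z_0,r(z_0))$ thanks to the compact family provided by Lemma~\ref{def:r0}(2). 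The argument for (5) is the mirror image: $f^{n_2}(D)$ is transverse to $W^s_N(x_2)$ at $y_2$, and pushing forward by $f^{M_j}$ the tangent direction is aligned (exponentially fast) with the unstable direction at $f^{M_j}(x_2)\in P$, making the component of $f^{n_2+M_j}(D)\cap N$ through $f^{M_j}(y_2)$ a horizontal graph.

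Finally, passing to a common subsequence (a standard diagonal extraction) on which both $N_j$ and $M_j$ satisfy all five conditions, the ``in particular'' clause follows: $N$ is biholomorphic to a bidisk, and a vertical graph and a horizontal graph in a bidisk always meet in a single transverse point. The main technical obstacle is making (3) and (5) uniform — specifically, controlling the inclination of the iterates uniformly over $z\in\mathbb{D}(z_0,r(z_0))$ and establishing the graph property in $N$ rather than only in shrinking Lyapunov charts. This is handled by using that $U$ is relatively compact in $N$ (Lemma~\ref{def:r0}(1)), so that all the curves $f^{-n_1}(S(z))\cap W^u_N(x_1)$ form a precompact family of analytic disks transverse to $W^u_N(x_1)$ with uniformly bounded transversality, which allows the Pesin inclination estimates to be applied with constants independent of $z$.
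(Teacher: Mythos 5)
Your proof is correct and follows essentially the same route as the paper's: Poincaré recurrence to produce the sequences $N_j$ and $M_j$, Pesin estimates for the shrinking of $f^{-N_j}(U)$, and the inclination ($\lambda$-)lemma to obtain the vertical/horizontal graph property. The only minor difference is bookkeeping — the paper extracts a subsequence so that $f^{-N_j}(x_1)\to\hat x_1\in P$ and phrases (3), (5) as $C^1$-convergence of the whole curve to the fixed local leaves $W^s_N(\hat x_1)$, $W^u_N(\hat x_2)$, which handles in one stroke the uniformity in $z$ that you rightly flag as the technical point; it also observes that the set $A_j := f^{-N_j-n_1}(\mathcal S(r(z_0)))\cap N$ is a union of strongly stable pieces $\{\phi^\iota = z - n_1 - N_j\}$, so the inclination lemma applies to all of them simultaneously, replacing your "precompact family" argument.
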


\begin{proof}
 We fix points 
 $x_1, x_2$ and integers  $n_1, n_2$ as in Lemmas \ref{lem:intersectP} and \ref{lem:intersectionP-D}.
 
	By Poincaré's recurrence theorem,  
	there exists $N_j \to \infty$ such that $f^{-N_j}(x_1) \in P$.
	Since $U \subset W^u_N(x_1)$, we have $f^{-N_j}(U) \subset N$.
 Up to extracting a subsequence, we may assume that $\lim_{j \to +\infty} f^{-N_j}(x_1) \to \hat{x}_1 \in P$, and 
 that
 we have $f_{|U}^{-N_j} \to \hat{x}_1$ uniformly. This proves (1) and (2).

	Let $A_j$ denote the connected component of $f^{-N_j-n_1}(\mathcal S(r(z_0))) \cap N$ containing $U$; by definition,
 $A_j$ is a union of 
	pieces  of strongly stable manifolds $\{\phi^\iota = z-n_1-N_j\}$. 
 Let $\tilde S$ be one such piece. 
 By the inclination lemma, each $\tilde S$ converges 
	in the $C^1$ topology to $W^s_N(\hat {x}_1)$, 
 which by Pesin theory (see the discussion in Section \ref{ss:pesin-boxes}) is a horizontal graph in $N$. This proves (3).

	On the other hand, by a symmetric argument, there exists $M_j \to +\infty$ such that $f^{M_j}(x_2) \in P$, and $f^{M_j}(x_2) \to \hat{x}_2 \in P$.
 Since $f^{n_2}(D)$ intersects $W^s(x_2)$ transversally in $P$, 
	the inclination lemma implies that the connected component of 
	$f^{M_j + n_2}(D) \cap N$ containing $f^{M_j}(y_2)$ converges in the $C^1$ topology
	to $W^u_N(\hat{x}_2)$, which is a horizontal graph in $N$. 
    This proves (4) and (5).
\end{proof}

\begin{proof}[End of the proof of Theorem \ref{th:islandp}]
 Set
 $W:=(\psi^o)^{-1}(\mathcal B)$.
By the definition of $h_f$ and $H_f$, 
 and in particular by the fact that
$H_f$
commutes with the translation  $\tau_1$,
 it is enough to prove the following property.

\medskip

 \begin{itemize}
\item[\textbf{($\star$)}]
for every $z_0\in \C$
there exists $r(z_0)>0$
such that
 for any open set $\tilde W$ intersecting $\partial W$
there exist 
 $n \in \Z$
	and $V \Subset W \cap (\tilde W+n)$ such that $H_f: V \to \D(z_0, r(z_0))$ is a conformal isomorphism.
 \end{itemize}

 \medskip

Fix $z_0 \in \C$ and, without loss of generality,
 let $\tilde W$ be an open ball 
 intersecting $\partial W$. 
  Set 
 $D:=\psi^o(\tilde W) \subset \Sigma$, and observe that $D$ satisfies the assumption of Lemma \ref{lem:conv-t-}.
	Let also $r(z_0)$ and $U$ be as in Lemma \ref{def:r0},
 $n_1,n_2$ be as in Lemmas \ref{lem:intersectP} and \ref{lem:intersectionP-D},
 and let the sequences 
 $\{N_j\}, \{M_j\}$
 be  as in Lemma \ref{lem:inclination}. 
 By Lemma \ref{lem:inclination},
for every $z \in \D(z_0,r(z_0))$ and every $j$, 
	the set $f^{-n_1- N_j-n_2-M_j}
 (S(z))$ intersects $D$ transversally at a point in
 $f^{-n_2-M_j}(U)$.
By the second item in 
Lemma \ref{lem:inclination}, we can fix $j_0$ such that the diameter of $f^{-n_1-N_j}(U)$ is much smaller than the diameter of $N$.
For convenience, we also set $\tilde n := n_1+n_2 + N_{j_0}+ M_{j_0}$.
\medskip

Set $V:=
\tilde n + (\psi^o)^{-1} (f^{-n_2-M_{j_0} -N_{j_0}} (U)) \subset (\psi^{o})^{-1} (D) = \tilde n + \tilde W$.
By the choice of $j_0$ and Lemma \ref{lem:inclination} (4), we have
 $V\Subset \tilde n+ \tilde W$.
For every $v\in  V$, let $u=u(v)$ be the point in $U$ given by
$u=f^{n_2+M_{j_0} + N_{j_0}} (\psi^o(v - \tilde n))$.
The map $v\mapsto u(v)$ is a biholomorphism from $V$ to $U$.
Recall also that, by
construction, the map $\phi^\iota \circ f^{n_1}$ is a biholomorphism between $U$ and $\mathbb D(z_0,r(z_0))$.
Hence, the map
$\tilde H (\cdot)
:=  \phi^\iota \circ f^{\tilde n }
\circ \psi^o (\cdot - \tilde n)$
is a biholomorphism from $V\Subset\tilde n+ \tilde W$ to $\mathbb D(z_0, r(z_0))$.
Recalling that $f\circ \psi^o (\cdot)  = \psi^o (\cdot +1)$,
we see that $\tilde H = H_f$.
Hence, $H_f$
is a biholomorphism from $\tilde n + V \Subset \tilde n + \tilde W$ to $\mathbb D(z,r(z_0))$. This
proves {\bf ($\star$)}, and the assertion follows.
\end{proof}

\begin{rem}
Observe
that our proof of Theorem \ref{th:islandp} does not really need that $f$ is a globally defined  Hénon map, but it is enough that it is an invertible horizontal-like map. In particular, the map $f$
does not need to be algebraic.
\end{rem}






\section{Almost maximal dimension for $J^+$ and Theorem \ref{th:maxdim}}

\subsection{Preliminaries and McMullen's result}

In order to prove Theorem \ref{th:maxdim}, the idea will be to first construct a hyperbolic (repelling) set of large Hausdorff dimension for some horn map, and then to transfer it 
 to some suitable perturbations of the initial semi-parabolic Hénon map.
 While we could follow more closely 
 Shishikura's arguments \cite{Shishikura98hausdorff}, 
 we will use a result by McMullen
\cite{mcmullen}, see Theorem \ref{th:mcmu} below,
to prove that there are small quasiconformal copies of the Mandelbrot set in the parameter space of horn maps. This will allow us to
bypass a part of the proof.
In this preliminary section, 
we recall some terminology and McMullen's
result.

\begin{defi}
    Let 
    $W\subset \rs$ be an open set and  $f: W \to \rs$ a holomorphic map. We say that $z \in \rs$
 is \emph{unramified}     (for $f$)
    if the closure of the set $\{ x \in W : \exists n \in \N, f^n(x)=z \text{ and } (f^n)'(x) \neq 0  \}$ is dense in 
    $J_F(f)$.
\end{defi}

For rational maps, some points may be completely ramified. For instance, it may happen that every 
preimage of some point is a critical point. However, we will see
in Lemma \ref{l:unramified}
 that this is not possible for maps with the small island property.

\begin{defi}\label{d:local-misiurewicz}
Let 
$M$ be a complex manifold
and $(f_\la)_{\lam \in M}: W \to \rs$
a holomorphic family of holomorphic maps.
    We
    say that the family
    $(f_\la)_{\lam \in M}$
    has a \emph{local Misiurewicz bifurcation of degree $d \geq 2$ at $\lam_0 \in M$} 
    if:
    \begin{enumerate}
        \item there exists a  marked critical point $c_\lam$ of $f_\la$
        and $n\in \N$ such that $x_{\lam_0}:=f_{\lam_0}^n(c_{\lam_0})$ is a repelling periodic point;
        \item the map $\lambda \mapsto f_\lam^n(c_\lam)$ is not constantly equal to $x_\lam$ in a neighborhood of $\lam_0$;
        \item $c_{\lambda_0}$ is unramified;
        \item the local degree of $f_\la^n$ at $c_\lam$ is constantly  equal to $d$ in a neighborhood of $\lam_0$.
    \end{enumerate}
\end{defi}

Recall that a critical point $c_\lambda$
is \emph{marked}
if it can be followed holomorphically as a function $\lambda \mapsto c_\lam =c(\lambda)$
of the parameter $\lambda$. Such a
critical point is \emph{active} at $\lambda_0$ if the sequence
$f^n_\lambda (c_\lambda)$ is not normal on any neighbourhood of $\lambda_0$.
As in \cite{mcmullen}, we remark that the subtle point in the above definition 
is the fourth request.

\medskip

We can now recall the main technical results in \cite{mcmullen}.
We will not use the following 
proposition, but we quote it since proving a version of it in our context will be a main point of our construction.

\begin{prop}\label{p:mcmullen-misiurewicz}
Let 
$M$ be a complex manifold
and $(f_\la)_{\lam \in M}: \rs \to \rs$
a holomorphic family of rational maps.
Assume that 
a marked critical point $c_\lambda$
is active and unramified at $\lam_0$.
Then, there exists 
two sequences $M\ni \lambda_n \to \lambda_0$ and
$\N \ni m_n \to \infty$
such that, for every $n\in \N$,
the family $f^{m_n}$ has a local Misiurewicz bifurcation at $\lambda_n$.
\end{prop}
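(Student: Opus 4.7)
The plan is to follow McMullen's strategy: combine Montel's normality theorem with a generic-position argument in the parameter space. First, fix a repelling fixed point $p_0 \in J_F(f_{\lambda_0})$ (abundant for rational maps of degree $\geq 2$), which persists as a holomorphic section $\lambda \mapsto p(\lambda)$ on a neighborhood $M_0$ of $\lambda_0$ by the implicit function theorem. Since $p_0$ is not an exceptional point, its iterated preimages under $f_{\lambda_0}$ are dense in $J_F(f_{\lambda_0})$, and only countably many of these preimages are critical for some iterate. Hence one selects three distinct preimages $a_1^0, a_2^0, a_3^0$ satisfying $f_{\lambda_0}^{K_i}(a_i^0) = p_0$ and $(f_{\lambda_0}^{K_i})'(a_i^0) \neq 0$, and extends each to a holomorphic section $a_i(\lambda)$ on $M_0$; after shrinking $M_0$ these three sections remain pairwise disjoint.

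Since $c_\lambda$ is active at $\lambda_0$, the family $\{F_N(\lambda) := f_\lambda^N(c_\lambda)\}_{N \in \N}$ is not normal at $\lambda_0$. Avoidance of the three disjoint holomorphic sections $a_i(\lambda)$ would force normality by Montel's theorem, so there exist sequences $\lambda_n \to \lambda_0$, $N_n \to \infty$, and (after passing to a subsequence) a fixed index $i \in \{1,2,3\}$ with $f_{\lambda_n}^{N_n}(c_{\lambda_n}) = a_i(\lambda_n)$. Set $m_n := N_n + K_i$ and $g_\lambda := f_\lambda^{m_n}$; then $m_n \to \infty$ and $g_{\lambda_n}(c_{\lambda_n}) = p(\lambda_n)$. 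Since $p$ is $f$-fixed, it is a fixed point of $g$, with multiplier $(f_{\lambda_n}'(p(\lambda_n)))^{m_n}$ of modulus greater than $1$, hence repelling. This realizes clause (1) of Definition \ref{d:local-misiurewicz} (with the integer there taken to be $1$). Clause (2) follows because the analytic set $V_n := \{\lambda \in M_0 : g_\lambda(c_\lambda) = p(\lambda)\}$ is a proper subvariety of $M_0$: were it all of $M_0$, the orbit of $c_\lambda$ would be eventually constant equal to $p(\lambda)$, making $\{F_N\}$ normal at $\lambda_0$ and contradicting activity.

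The main obstacle lies in clauses (3) and (4). For clause (4), the local degree of $g_\lambda$ at $c_\lambda$ factors as $\prod_{j=0}^{m_n-1} \deg_{f_\lambda^j(c_\lambda)} f_\lambda$; each factor equals $1$ unless $f_\lambda^j(c_\lambda)$ coincides with a marked critical point $c_k(\lambda)$ of $f_\lambda$, an incidence that either persists in a neighborhood of $\lambda_n$ (contributing a constant factor to the total degree) or cuts out a proper analytic set $B_{j,k} \subsetneq M_0$. Local constancy of the total degree at $\lambda_n$ thus reduces to choosing $\lambda_n \in V_n$ outside the countable union of those $B_{j,k}$ that do not contain $V_n$, which is possible by a Baire argument. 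The delicate point is to exclude the possibility $V_n \subset B_{j,k}$ for some non-persistent incidence: this is where the unramified hypothesis on $c_{\lambda_0}$ is essential, providing the flexibility in the choice of the intermediate target $a_i$ needed to reroute the orbit around any accidental critical-orbit relation that would otherwise be imposed on every point of $V_n$. Clause (3), the unramification of $c_{\lambda_n}$ for $g$, follows from a parallel genericity argument, using $J_F(g) = J_F(f)$ and the continuous dependence of the post-critical set on $\lambda$, so the unramification at $\lambda_0$ transfers to a dense $G_\delta$ of parameters in $V_n$.
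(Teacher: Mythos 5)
The proposition you are proving is quoted verbatim from McMullen \cite{mcmullen}; the paper gives no independent proof of it, but does prove an analogous statement (Proposition \ref{prop:existencem}) for the family $h_\lambda = \lambda h$ following the same McMullen strategy, so that is the natural basis for comparison.

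The first half of your argument (persistence of a repelling point, three non-critically related preimage sections, Montel forcing an intersection of the critical orbit with one of them, and the fact that the resulting relation is non-persistent) is the standard opening and matches what the paper does via its Shooting Lemma \ref{lem:shooting}. The real content of the proposition, however, is clause (4), and that is exactly where your sketch has a gap. Your Baire-category argument does not work: in a one-dimensional parameter family, $V_n = \{\lambda : g_\lambda(c_\lambda) = p(\lambda)\}$ is a discrete (typically finite) set, so there is no room to choose $\lambda_n$ ``generically outside a countable union of $B_{j,k}$.'' The whole difficulty is precisely that $V_n$ may consist of a single point which happens to lie on one of the incidence varieties $B_{j,k}$, and your appeal to the unramified hypothesis as ``providing the flexibility to reroute'' does not explain how this is avoided.

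The resolution (McMullen's, and the one the paper carries out in Proposition \ref{prop:existencem}) is a concrete construction rather than a genericity argument. Having established a Misiurewicz relation $f^n_{\lambda_0}(c_{\lambda_0}) = a_{\lambda_0}$ with $a_\lambda$ a persistent repelling point, one fixes a linearization domain $U$ for $a_\lambda$ valid for all $\lambda$ near $\lambda_0$; this domain contains no critical points. One then chooses a second repelling orbit $b_\lambda$ inside $U$, observes that $a_\lambda$ has preimages of $b_\lambda$ accumulating on it (all of them inside $U$, hence non-critical), and re-targets the critical orbit so that $f^n(c_{\lambda'}) = b'_{\lambda'}$ at a nearby parameter $\lambda'$ (again by Montel, or by Lemma \ref{lem:shooting}). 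Since the segment of orbit from $b'_{\lambda'}$ to $b_{\lambda'}$ lies entirely in $U$, it contributes local degree one at every step and this persists for nearby $\lambda$; the local degree of $f^n$ at $c_\lambda$ is itself locally constant (arranging $\lambda_0$ so that the degree of $c$ as a critical point of $f$ does not jump, which is a codimension-one condition to avoid). Thus the total degree of $f^{n+k}$ at $c_\lambda$ is locally constant at $\lambda'$, giving clause (4). Without this second-target-in-the-linearization-domain device your argument does not close.
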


We denote by $\mathbf{M_d}$ the \emph{degree $d$ Mandelbrot set}, i.e., the connectedness locus of the family $z^d + c$, for $c\in \C$.

The theorem below was proved by McMullen in the context of holomorphic families of rational maps.
However, the arguments are purely local (indeed, the proof consists in finding suitable polynomial-like restrictions
of the maps $f_\la$) and the proof carries through verbatim in our more general setting.

\begin{thm}[McMullen \cite{mcmullen}]\label{th:mcmu}
    If a
    holomorphic family
    $(f_\lam)_{\lam \in M} \colon W\to \rs$ has a local Misiurewicz bifurcation of degree $d$ at some parameter $\lam_0$, then there is a sequence of embeddings $\phi_n \colon \mathbf{M_d} \to M$
    such that for every $c \in \mathbf{M_d}$, we have
    $\lam_n:=\phi_n(c) \to \lam_0$ and,
    for every $n\in \N$,
    $f_{\lam_n}$ has a polynomial-like restriction which is hybrid-equivalent to $z^d+c$.
    Moreover, the quasiconformal distortion of these copies tends to $0$ as $n \to +\infty$.
\end{thm}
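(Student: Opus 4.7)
The plan is to follow McMullen's construction in \cite{mcmullen} and simply verify that every step of his argument is purely local, depending only on the dynamics of $f_\lambda$ in a neighborhood of the critical point $c_{\lambda_0}$ and of the target repelling cycle, together with holomorphic dependence on $\lambda \in M$. Since nothing in the argument uses that $W = \rs$ or that $f_\lambda$ is globally defined on $\rs$, the statement should transplant without modification, and my job is simply to isolate the local ingredients and confirm that each is still available.

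The core of the construction proceeds in three stages. First I linearize at the target: let $x_{\lambda_0} = f_{\lambda_0}^n(c_{\lambda_0})$ have period $p$ and multiplier $\rho$ with $|\rho|>1$. Persistence and the implicit function theorem produce a holomorphic family $x_\lambda$ of repelling periodic points and a Koenigs linearizer $\phi_\lambda$, holomorphic in $(z,\lambda)$, on a fixed neighborhood of $x_\lambda$. Using the unramified hypothesis (Definition \ref{d:local-misiurewicz} (3)), I select a sequence of preimages $y_{m,\lambda_0} \to c_{\lambda_0}$ with $f_{\lambda_0}^{k_m}(y_{m,\lambda_0}) = x_{\lambda_0}$ and $(f_{\lambda_0}^{k_m})'(y_{m,\lambda_0}) \neq 0$; by the implicit function theorem again these continue holomorphically to $y_{m,\lambda}$ for $\lambda$ near $\lambda_0$.

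Second, I pull back $\phi_\lambda$ along the univalent inverse branches of $f_\lambda^{k_m}$ ending at $y_{m,\lambda}$ and, after rescaling by $\rho_\lambda^{k_m}$, obtain in the limit a normalized univalent chart $\psi_\lambda$ near $c_{\lambda_0}$. Composing with the local degree-$d$ factor granted by Definition \ref{d:local-misiurewicz} (4) produces polynomial-like restrictions of the iterate $f_\lambda^{n + m_n p}$ of degree $d$, valid for all $\lambda$ in a small disk around $\lambda_0$ and all $m_n$ sufficiently large. Third, the activity condition (Definition \ref{d:local-misiurewicz} (2)) ensures that as $\lambda$ ranges over this disk, the position of the critical value $f_\lambda^n(c_\lambda)$ relative to $x_\lambda$ sweeps out, after rescaling, a full disk in the Koenigs coordinate. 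One is then in the setting of a Mandelbrot-like family in the sense of Douady--Hubbard \cite{DH85dynamics}, and their theory delivers the embeddings $\phi_n \colon \mathbf{M_d} \to M$ whose images realize all hybrid classes $z^d + c$.

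The quasiconformal distortion of these copies is controlled by how close the rescaled return map is to the pure monomial $w \mapsto w^d$: as $m_n \to \infty$ this deviation vanishes in the compact-open topology on any fixed disk, so the dilatation of the straightening conjugacy tends to $1$ by the standard continuity of Douady--Hubbard straightening on polynomial-like maps. The main obstacle, and the only step requiring genuine thought, is the transversality-to-rescaling argument showing that the critical value fills out a Mandelbrot-sized window in the Koenigs linearizer rather than shrinking faster than the rescaling factor; this is the heart of McMullen's proof and the precise place where the unramified hypothesis enters. Once it is established in his setting, no new input is needed here, because every object involved (Koenigs linearizer, unramified preimages, local degree, activity, polynomial-like straightening) is defined in a small compact set on which $f_\lambda$ behaves exactly as in the rational case.
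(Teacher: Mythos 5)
Your proposal takes exactly the same route as the paper: the paper itself does not re-prove McMullen's theorem but simply records, in the paragraph preceding the statement, that "the arguments are purely local (indeed, the proof consists in finding suitable polynomial-like restrictions of the maps $f_\la$) and the proof carries through verbatim in our more general setting." You have spelled out the local ingredients of McMullen's argument (Koenigs linearization, pullback along unramified preimages, polynomial-like restriction of the return map, Mandelbrot-like family, Douady--Hubbard straightening with distortion tending to $1$) and confirmed each depends only on data in a compact neighborhood, which is precisely the justification the authors invoke.
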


In practice, McMullen's
results 
give that, whenever
-- in a family of rational maps 
--
a bifurcation occurs, one can create 
local Misiurewicz bifurcations, and thanks to them, by means of a renormalization process,
a large set of bifurcation parameters. As a consequence, by \cite{Shishikura98hausdorff}, one can then find 
maps with large hyperbolic dimension close to any bifurcation parameter. 
In the next section, we will adapt the above ideas
in the setting of maps with the small island property.

\subsection{Large hyperbolic sets from the small island property}

In this section,
we fix an open set $W\subset \rs$
and a holomorphic map $h\colon W\to \rs$ 
satisfying the small island property.
We also assume that $\partial W \neq \emptyset$ and 
that $h$ has at least one critical point (both of these properties hold for horn maps of dissipative semi-parabolic Hénon maps; in particular, the existence of a critical point was proved in \cite{DL15stability}).
For every $\lam \in \C^*$, we denote
$h_\lambda := \lambda h$.
The following is the main result of this section.

\begin{prop}\label{p:big-hyperbolic-island}
For every $\eps>0$  
there exists $N \in \N$,  $\lam_1 \in \C^*$,
simply connected domains $U, U_1, \ldots, U_N \subset W \cap \C^*$, 
and integers $m_1, \dots, m_N$
such that:
\begin{enumerate}
    \item for all $1 \leq i <j\leq N$,
    we have $U_i,U_j \Subset U$ and $\overline{U_i} \cap \overline{U_j} = \emptyset$;
    \item for all $1 \leq i \leq N$,
    the map
    $h_{\la_1}^{m_i}: U_i \to U$ is a conformal isomorphism; 
    \item if $c:=\max_{1 \leq i \leq N} \sup_{z \in U_i} |(h_{\la_1}^{m_i})'(z)|$, then 
    $$2-\eps \leq \frac{\log N}{\log c}.$$
\end{enumerate}
\end{prop}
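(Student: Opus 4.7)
The plan is to find a parameter $\lambda_1 \in \C^*$ for which $h_{\lambda_1}$ admits a polynomial-like restriction whose hyperbolic Julia set has Hausdorff dimension arbitrarily close to $2$, and then to extract the iterated function system $(U_i, m_i)$ from that hyperbolic set. The main tools are McMullen's Theorem~\ref{th:mcmu}, which produces quasiconformal copies of $\mathbf{M_d}$ in $\lambda$-space from a local Misiurewicz bifurcation, and Shishikura's result that quadratic polynomials of hyperbolic dimension close to $2$ exist arbitrarily close to parabolic parameters on $\partial \mathbf{M_2}$.

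The first step is to produce a local Misiurewicz bifurcation in the family $(h_\lambda)_{\lambda \in \C^*}$. By Remark~\ref{r:natural}, every $h_\lambda$ has the small island property, so Theorem~\ref{th:repdense} applies uniformly in $\lambda$. The critical point $c_0$ of $h$ satisfies $(h_\lambda)'(c_0) = \lambda h'(c_0) = 0$ for every $\lambda$, so it is a marked critical point for the whole family, and $h_\lambda(c_0) = \lambda h(c_0)$ depends non-trivially on $\lambda$. I claim $c_0$ is active at some $\lambda_0$: otherwise the family $\{\lambda \mapsto h_\lambda^n(c_0)\}$ would be normal on all of $\C^*$, incompatible with the density of repelling cycles in each $J_F(h_\lambda)$ given by Theorem~\ref{th:repdense} once one moves the critical value through sufficiently many of them. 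The unramified condition is handled by inspecting the proof of Theorem~\ref{th:repdense}: the island $\Omega \to \D(z_0, r(z_0))$ provided by the small island property is a \emph{conformal} isomorphism, and therefore non-critical, so every $z_0 \in \C^*$ admits non-critical preimages accumulating on $\partial W$, which by Lemma~\ref{lem:boundary} is dense in $J_F(h)$. Mimicking McMullen's proof of Proposition~\ref{p:mcmullen-misiurewicz}, one can then perturb $\lambda$ until some iterate $h_\lambda^n(c_0)$ lands on a repelling periodic point, while the local degree of $h_\lambda^n$ at $c_0$ remains constant near the resulting parameter $\lambda_*$.

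With a local Misiurewicz bifurcation of some degree $d \geq 2$ in hand, Theorem~\ref{th:mcmu} provides embeddings $\phi_n \colon \mathbf{M_d} \hookrightarrow \C^*$ with quasiconformal distortion tending to $0$ and accumulating on $\lambda_*$, such that $h_{\phi_n(c)}$ carries a polynomial-like restriction hybrid-equivalent to $z^d + c$ for every $c \in \mathbf{M_d}$. Fix $\eta > 0$. By Shishikura's theorem there exists $c^* \in \partial \mathbf{M_d}$ for which $z^d + c^*$ is hyperbolic and the Hausdorff dimension of its Julia set exceeds $2 - \eta$. Setting $\lambda_1 := \phi_n(c^*)$ for $n$ large, the polynomial-like restriction $g := h_{\lambda_1}^m \colon V' \to V$ is a small quasiconformal deformation of $z^d + c^*$ and therefore inherits a hyperbolic set $K$ with $\dim_H K > 2 - 2\eta$. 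Applying Bowen's formula for conformal expanding repellers together with the Koebe distortion theorem, for $k$ large one can then select $N$ pairwise disjoint simply connected inverse-branch domains $U_i \Subset U := V$ of $g^k$ with uniform derivative bound $c$ satisfying $\log N / \log c \geq 2 - \eps$. Taking $m_i := k m$ for all $i$ (or suitable perturbations thereof if one wishes distinct $m_i$) gives the required data.

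The main obstacle is the first step. Proposition~\ref{p:mcmullen-misiurewicz} is stated by McMullen for rational maps and its proof uses global tools (notably Montel's theorem applied on $\rs$) that are unavailable in our setting, where $h_\lambda$ is defined only on $W \subsetneq \rs$. The adaptation must rely solely on the small island property and Theorem~\ref{th:repdense} in place of these global tools, and careful bookkeeping is needed to ensure that the local degree at $c_0$ and the inverse branches used to construct the target repelling cycle remain well defined under perturbations of $\lambda$. Once this is in place, the remaining steps constitute essentially Shishikura's construction transported through McMullen's polynomial-like renormalization.
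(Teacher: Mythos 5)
Your high-level route — produce a local Misiurewicz bifurcation in the family $(h_\lambda)_{\lambda\in\C^*}$, invoke Theorem~\ref{th:mcmu} to get quasiconformal Mandelbrot copies in $\lambda$-space, and then import Shishikura's large-dimension hyperbolic sets through the polynomial-like restriction — is exactly the paper's route. The differences are in the first step, and there you leave a genuine gap which the paper fills with a specific tool.

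You try to establish the Misiurewicz bifurcation by first arguing that the marked critical point $c_0$ is \emph{active} at some $\lambda_0$, using a normality-vs-density-of-repelling-cycles heuristic. This argument does not go through as written: the notion of activity for a map defined only on $W\subsetneq\rs$ is subtle (the critical orbit may simply leave $W$ after finitely many steps, at which point "normality of $\lambda\mapsto h_\lambda^n(c_0)$" is not meaningful), and you give no mechanism for how density of repelling cycles in each $J_F(h_\lambda)$ forces non-normality of the critical orbit in $\lambda$. More importantly, you then say one should "mimic McMullen's proof of Proposition~\ref{p:mcmullen-misiurewicz}" and perturb $\lambda$ until the critical orbit hits a repelling cycle, and you flag honestly that this adaptation (replacing Montel on $\rs$ by something working on $W$) is the main obstacle. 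That obstacle is precisely what the paper's Shooting Lemma (Lemma~\ref{lem:shooting}) is designed to remove: choose $\lambda_0$ so that the critical value $\lambda_0 h(c_0)$ lies on $\partial W$, use the small island property of $h$ to produce a conformal island over a disk around $\lambda^{-1}\gamma_2(\lambda)$ (the linearized target repelling point) inside the image of a thin disk of parameters, and apply the argument-principle Lemma~\ref{lem:fp} to force the equation $h_\lambda^{n+1}(c_0)=\gamma_2(\lambda)$ to have a solution. This bypasses activity entirely. The same shooting device is reused to kill the possibility that the local degree at $c_0$ drops off $\lambda_0$, which is the fourth condition of Definition~\ref{d:local-misiurewicz} that you correctly identify as the delicate one but do not settle. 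Until that mechanism is supplied, the proposal has an unfilled gap precisely at the step that the whole island-property framework was built to address; the remaining steps (Shishikura, McMullen, Bowen/Koebe extraction of the CIFS from the transported hyperbolic set) are correct and match the paper, modulo the cosmetic difference that the paper quotes Shishikura directly in CIFS form rather than extracting the CIFS from a high-dimensional hyperbolic repeller.
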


In particular, it follows from the Bowen formula \cite{Bowen75notes} that 
the limit set of the Conformal Iterated Function System (CIFS) given by 
the holomorphic maps
$h_{\la_1}^{-m_i}: U \to U_i$
as in the above statement
has Hausdorff dimension at least $2-\eps$.
\medskip

We begin with the following preliminary result, which we will use to replace the non-normality of the critical orbit near a bifurcation parameter in a family of rational maps.

\begin{lem}[Shooting Lemma]\label{lem:shooting}
Let $(h_\lam)_{\lam \in \C^*}$ be the holomorphic family defined by
$h_\lam = \lam h$ for every $\lam \in \C^*$.
Fix $\la_0\in \C^*$
and let
$\gamma_1, \gamma_2$ be two holomorphic maps defined in a neighborhood of $\lam_0$ and such that 
    $h^n_{\lam_0} \circ \gamma_1(\lam_0) \in \partial W$ for some $n \in \N$.
	Then there exists $\la'$ arbitrarily close to $\la_0$  such that
	$$h_{\lam'}^{n+1}(\gamma_1(\lam')) = \gamma_2(\lam').$$
\end{lem}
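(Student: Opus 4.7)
The plan is to rewrite the desired identity as a scalar equation of the form $h(p(\la')) = w(\la')$, where
\[
p(\la) := h_\la^n(\gamma_1(\la)), \qquad w(\la) := \gamma_2(\la)/\la,
\]
and then to solve it using a conformal inverse branch of $h$ produced by the small island property near the boundary point $p_0 := p(\la_0) \in \partial W$. After perturbing $\la_0$ infinitesimally if needed, one may assume $z_0 := w(\la_0) \in \C^*$. The family $\la \mapsto h_\la = \la h$ varies genuinely with $\la$, so in the intended application the map $p$ will be non-constant; I treat the generic case $p'(\la_0) \neq 0$ first, and a short separate remark handles $p$ vanishing to some exact order $m \geq 2$ by extracting $m$-th roots.

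For every small $\eps > 0$, the disk $U := \D(p_0, \eps)$ intersects $\partial W$, so the small island property supplies $\Omega \Subset U \cap W$ together with a conformal isomorphism
\[
\phi \colon \D(z_0, r(z_0)) \to \Omega
\]
inverse to $h$. Since $\Omega \subset \D(p_0, \eps)$, Schwarz's lemma applied to $\phi - \phi(z_0)$ yields the two crucial estimates $|\phi(z_0) - p_0| < \eps$ and $|\phi'(z_0)| \leq 2\eps/r(z_0)$. The first says that the image of $\phi$ sits close to the boundary point $p_0$, and the second is the mechanism forcing the function $F$ below to have a zero near $\la_0$.

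The next step is to seek a zero of the holomorphic function
\[
F(\la) := p(\la) - \phi(w(\la)),
\]
well-defined on a neighborhood of $\la_0$ because $w(\la_0) = z_0 \in \D(z_0, r(z_0))$. The estimates above give $|F(\la_0)| = |p_0 - \phi(z_0)| < \eps$, while
\[
|F'(\la_0)| \geq |p'(\la_0)| - |\phi'(z_0)| \, |w'(\la_0)| \geq \tfrac{1}{2}|p'(\la_0)|
\]
as soon as $\eps$ is small relative to the fixed quantities $|p'(\la_0)|$, $|w'(\la_0)|$, $r(z_0)$. A Rouché comparison between $F$ and its affine linearization on a disk $\D(\la_0, C \eps)$, for a suitable universal constant $C$, then produces a zero $\la' \in \D(\la_0, C\eps)$. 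At this $\la'$ one has $p(\la') = \phi(w(\la')) \in \Omega \subset W$, so $h$ is defined at $p(\la')$ and
\[
h_{\la'}^{n+1}(\gamma_1(\la')) = \la' \, h(p(\la')) = \la' \, w(\la') = \gamma_2(\la'),
\]
as required. Letting $\eps \to 0$ sends $\la' \to \la_0$, proving the lemma.

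I expect the main obstacle to be the apparent defect that $F(\la_0) \neq 0$ in general, so there is no free zero of $F$ at $\la_0$ from which to perturb. This is resolved by the crucial observation that shrinking $U$ toward $p_0$ shrinks $|F(\la_0)|$ linearly in $\eps$ via Schwarz, while simultaneously the competing term $\phi'(z_0) w'(\la_0)$ in $F'(\la_0)$ also becomes small, so that $F'(\la_0)$ stays within a factor of $2$ of the fixed non-zero value $p'(\la_0)$. The resulting imbalance between $|F(\la_0)| = O(\eps)$ and $|F'(\la_0)| \asymp 1$ is what makes Rouché succeed and forces the zero to cluster at $\la_0$.
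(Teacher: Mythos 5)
Your reduction to the scalar equation $h(p(\lambda)) = w(\lambda)$, with $p(\lambda) = h_\lambda^n(\gamma_1(\lambda))$ and $w(\lambda) = \gamma_2(\lambda)/\lambda$, is exactly the paper's first step (their $G$ and $g$). You also invoke the small island property in the same place to produce an inverse branch $\phi$ of $h$ landing in a small disk about the boundary point $p_0$. What differs is the analytic engine used to produce the zero. The paper applies the small island property to the set $G(\D(\lambda_0,\eps))$, pulls the resulting island back to a Jordan domain $V$ in parameter space on which $h\circ G$ covers a fixed disk $D$ about $g(\lambda_0)$, observes that $g(V)$ is much smaller than $D$, and concludes by a degree/argument-principle comparison (their Lemma \ref{lem:fp}). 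You instead apply the small island property directly to the disk $\D(p_0,\eps)$, rewrite the equation as $F(\lambda):= p(\lambda)-\phi(w(\lambda))=0$, and turn the problem into a quantitative implicit-function argument: the Schwarz lemma gives $|\phi(z_0)-p_0|<\eps$ and $|\phi'(z_0)|\leq 2\eps/r(z_0)$, so $|F(\lambda_0)|=O(\eps)$ while $F'(\lambda_0)$ stays comparable to $p'(\lambda_0)$, and Rouch\'e against the affine linearization on $\D(\lambda_0,C\eps)$ produces the zero. Both arguments rest on the same mechanism -- that the inverse branch supplied by the small island property is a strong contraction into a tiny neighbourhood of $\partial W$ -- but you compare derivatives where the paper compares images, which avoids their auxiliary argument-principle lemma and gives an explicit rate $|\lambda'-\lambda_0|=O(\eps)$.

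Two small caveats worth flagging. First, your main argument genuinely requires $p'(\lambda_0)\neq 0$; the paper's degree argument handles an arbitrary local degree of $G$ at $\lambda_0$ in one stroke (their $\eps^{1/d}$ bound), whereas your remark about ``extracting $m$-th roots'' is only a sketch -- to close it you would need to redo the Rouch\'e comparison on a disk of radius $\sim\eps^{1/m}$ rather than $\eps$, using that $\phi\circ w$ has derivative $O(\eps)$ there. Second, the constant $C$ in $\D(\lambda_0,C\eps)$ is not universal: it is of order $1/|p'(\lambda_0)|$ (and the Rouch\'e error term needs a bound on $F''$, which is $O(1)$ by Cauchy estimates on $\phi$ together with the boundedness of $p''$). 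Neither point affects the intended application, where $p(\lambda)=\lambda h(c)$ has $p'(\lambda_0)=h(c)\neq 0$, but they should be stated if the lemma is to be proved in the generality in which it is phrased.
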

In the proof of
Lemma~\ref{lem:shooting} we will need the following consequence of the argument principle.

\begin{lem}
\label{lem:fp} 
Let $V$ be a Jordan domain, and let $f,g$ be holomorphic functions in a neighborhood of $\overline{V}$. Suppose that  $g(\overline{V})\subset f(V)$ and  $g(\partial V)\cap f(\partial V)=\emptyset$. Then there exists $\lambda\in V$ such that $f(\lambda)=g(\lambda)$.
\end{lem}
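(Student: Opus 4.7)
The plan is to apply the argument principle to the holomorphic function $f-g$ on $V$. The hypothesis $g(\partial V)\cap f(\partial V)=\emptyset$ guarantees that $f-g$ is nonvanishing on $\partial V$, so the number of zeros of $f-g$ in $V$ (counted with multiplicity) equals the winding number of the closed curve $(f-g)(\partial V)$ around $0$; it therefore suffices to show this winding number is at least $1$.

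I would establish this by a homotopy argument. Parametrize $\partial V$ by a homeomorphism $\gamma\colon S^1\to \partial V$, fix a basepoint $u_0\in S^1$, and let $\alpha\colon [0,1]\times S^1\to S^1$ be a continuous map with $\alpha(0,u)=u$ and $\alpha(1,u)=u_0$. Define the continuous family of closed curves
\[
\sigma_s(u) := f(\gamma(u)) - g(\gamma(\alpha(s,u))), \qquad s\in[0,1],\ u\in S^1.
\]
At every $(s,u)$, the first term lies in $f(\partial V)$ and the second in $g(\partial V)$, so the disjointness hypothesis immediately yields $\sigma_s(u)\neq 0$. Thus $\{\sigma_s\}$ is a homotopy of loops in $\C^*$ from $\sigma_0=(f-g)\circ\gamma$ to $\sigma_1=(f\circ\gamma)-w_0$, where $w_0:=g(\gamma(u_0))\in g(\partial V)$, and the winding number around $0$ is preserved.

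To conclude, the winding number of $\sigma_1$ around $0$ equals the winding of $f\circ\gamma$ around $w_0$, which by the argument principle counts the number of $f$-preimages of $w_0$ in $V$. Since $w_0\in g(\overline V)\subset f(V)$ by hypothesis, this count is at least $1$, hence the winding of $\sigma_0$ around $0$ is also at least $1$, producing the desired $\lambda\in V$ with $f(\lambda)=g(\lambda)$.

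The main technical choice, rather than a genuine obstacle, is the form of the homotopy: the naive linear interpolation $f-((1-t)g+tw_0)$ need not avoid $0$ on $\partial V$, because the segment joining $g(\lambda)$ to $w_0$ can cross $f(\partial V)$. The homotopy above circumvents this by keeping the subtracted value inside $g(\partial V)$ at every stage, which is precisely the set that the hypothesis guarantees to be disjoint from $f(\partial V)$.
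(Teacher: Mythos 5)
The paper does not supply a proof of Lemma~\ref{lem:fp}, so your attempt stands on its own. Unfortunately it contains a genuine topological gap.

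The homotopy you postulate, a continuous map $\alpha\colon[0,1]\times S^1\to S^1$ with $\alpha(0,u)=u$ and $\alpha(1,u)=u_0$, does not exist: it would be a homotopy from $\mathrm{id}_{S^1}$ (degree $1$) to a constant map (degree $0$), and these lie in different classes of $\pi_1(S^1)\cong\Z$. Consequently the claimed equality of winding numbers, $\mathrm{wind}(\sigma_0,0)=\mathrm{wind}(\sigma_1,0)$, is not established, and in fact it is false in general. Concretely, setting $\Phi(u,v):=f(\gamma(u))-g(\gamma(v))$ — a continuous map $S^1\times S^1\to\C^*$ precisely by the disjointness hypothesis — the diagonal loop $\sigma_0(u)=\Phi(u,u)$ is homologous in $S^1\times S^1$ to the sum of the two coordinate loops, so
\[
\mathrm{wind}(\sigma_0,0)=\mathrm{wind}\bigl(\Phi(\cdot,u_0),0\bigr)+\mathrm{wind}\bigl(\Phi(u_0,\cdot),0\bigr)=m+n,
\]
where $m=\mathrm{wind}(f\circ\gamma,\,g(\gamma(u_0)))$ (your $\sigma_1$) and $n=\mathrm{wind}(g\circ\gamma,\,f(\gamma(u_0)))$. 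Your argument tacitly assumes $n=0$, which need not hold.

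The lemma is nevertheless true, and the identity above provides the fix: by the argument principle applied to $g$, the integer $n$ counts the $g$-preimages of $f(\gamma(u_0))$ in $V$ and is therefore nonnegative (note $f(\gamma(u_0))\notin g(\partial V)$ by hypothesis, so $n$ is well defined); and, as you correctly observe, $m\ge 1$ because $g(\gamma(u_0))\in g(\overline V)\subset f(V)$. Hence $\mathrm{wind}((f-g)\circ\gamma,0)=m+n\ge 1$, giving the desired $\lambda\in V$. Alternatively, one can choose $u_0$ so that $f(\gamma(u_0))\in\partial f(V)$, which forces $f(\gamma(u_0))\notin f(V)\supset g(V)$ and hence $n=0$; but even with that choice your proposed homotopy through $g(\partial V)$ cannot be constructed, so the torus-homology (or an equivalent two-variable argument-principle) step cannot be avoided.
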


\begin{proof}[Proof of Proposition~\ref{lem:shooting}]
For every $\lam\in \mathbb C^*$, 
set $G(\lam):=h_\lam^n(\gamma_1(\lam))$ and $g(\lam):= \lam^{-1} \gamma_2(\la)$. Then, the equation 
    $h_\lam^{n+1}(\gamma_1(\lam))=\gamma_2(\lam)$
    may be rewritten as
    $$h_\lam \circ G(\lam)=g(\lam).$$
    We will apply Lemma \ref{lem:fp} to 
    the functions $f (\lambda):= h_\lam \circ G (\lambda)$ and $g (\lambda)$. 

Let $D$ be a disk centered at $g(\lam_0)$ small enough for the small island property of $h$ to apply
    (i.e., whose radius is smaller than $r(g(\lambda_0))$).
    Fix also an $\eps>0$ very small compared to the diameter of $D$, and also 
    small enough so that 
    $G: \D(\lam_0,\eps) \to G(\D(\lam_0,\eps))$ is a branched cover without critical points besides possibly $\lam_0$.
    By assumption, $G(\D(\lam_0,\eps))$ intersects $\partial W$.
	
	By the small island property of $h$
 (which implies the small island property of  $h_\lambda$ for every $\lambda \in \mathbb C^*$, see Remark \ref{r:natural}), 
 there exists $U \Subset G(\D(\lam_0,\eps))$ such that $f_{\la_0}(U)=D$. 
	Let $V$ denote a connected component of $G^{-1}(U)$ inside $\D(\la_0,\delta)$. By the choice of $\eps$, $V$ is a Jordan domain. By construction, we have $f \circ G(U)=D$. On the other hand, 
    $g(U)$ is contained in the disk $\D( g(\lam_0), C \eps^{1/d})$, where $d$ is the local degree of $G$ at $\lam_0$ and $C$ is a positive constant independent of $\epsilon$. 
    Therefore, for $\eps$ small enough, 
    we have $g(U) \Subset f \circ G(U)$ and $g(\partial U) \cap f \circ G(\partial U) = \emptyset$. It follows from 
    Lemma \ref{lem:fp}
    that there exists $\lam' \in U$ such that $f \circ G(\lam')=g(\lam')$. The proof is complete.
\end{proof}

Proposition \ref{p:mcmullen-misiurewicz} has an assumption on the non-ramification of the critical point. In our setting, we will be able to get rid of that assumption thanks to the following lemma.

\begin{lem}\label{l:unramified}
  Every 
    $z \in \C^*$ is unramified for $h$.
\end{lem}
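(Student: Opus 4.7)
The plan is to emulate the strategy of the proof of Theorem~\ref{th:repdense}, combining the small island property at $z$ with the characterization of $J_F(h)$ provided by Lemma~\ref{lem:boundary}. As in that earlier proof, I would first dispose of the case where $W_\infty$ has a non-hyperbolic connected component: $h$ is then a rational map, a transcendental self-map of $\C^*$, or a transcendental entire map, and the density of unramified preimages of any given $z \in \C^*$ is classical in each case. In the remaining case, Lemma~\ref{lem:boundary} applies and gives $J_F(h) = \overline{\bigcup_{n \geq 0} h^{-n}(\partial W)}$.

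Fix $z \in \C^*$ with associated radius $r = r(z)$ given by the small island property, a point $y \in J_F(h)$, and a neighborhood $V$ of $y$. The goal is to produce some $x \in V$ and $k \in \N$ with $h^k(x) = z$ and $(h^k)'(x) \neq 0$. Since $J_F(h)$ has no isolated points, I may assume $y \in \C^*$. By Lemma~\ref{lem:boundary}, there exist $y' \in V$ and $m \in \N$ such that $h^m(y') \in \partial W$. After replacing $V$ by a sufficiently small connected neighborhood of $y'$ (still inside the original one), the map $h^m \colon V \to h^m(V)$ is a branched covering whose only possible ramification point is $y'$.

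Because $h^m(V)$ intersects $\partial W$, the small island property applied at the point $z$ to the domain $h^m(V)$ yields a subdomain $\Omega \Subset h^m(V) \cap W$ such that $h \colon \Omega \to \D(z, r)$ is a conformal isomorphism. Let $w \in \Omega$ be the unique preimage of $z$; by construction $h'(w) \neq 0$. The key observation is that, since $\Omega \subset W$ whereas $h^m(y') \in \partial W$, we have $w \neq h^m(y')$. Consequently, any preimage $w' \in V$ of $w$ under $h^m$ is distinct from $y'$, so $(h^m)'(w') \neq 0$. Setting $k = m + 1$, we get $h^{k}(w') = h(w) = z$ and, by the chain rule, $(h^{k})'(w') = h'(w) \cdot (h^m)'(w') \neq 0$, exhibiting the desired unramified preimage of $z$ inside $V$.

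Rather than a genuine obstacle, the point to emphasize is the use of compact containment $\Omega \Subset h^m(V) \cap W$ in the definition of the small island property: it is precisely this that guarantees $w \in W$ and hence $w \neq h^m(y')$, allowing the pull-back by $h^m$ to avoid the sole possible ramification point $y'$. This is the one place where the full strength of Definition~\ref{d:small-island}, as opposed to mere surjectivity of $h$ onto small disks, is really needed.
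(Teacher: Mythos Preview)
Your proof is correct and follows the same overall strategy as the paper's: push a neighborhood forward until it meets $\partial W$, invoke the small island property there to locate a regular preimage $w$ of $z$ under $h$, and then pull $w$ back while avoiding critical points. The interesting difference lies in how criticality is avoided. You shrink the neighborhood so that $h^m$ has at most the single potential critical point $y'$, and then use the topological separation $w \in \Omega \Subset W$ versus $h^m(y') \in \partial W$ to conclude $w$ is not the critical value; this needs only one application of the small island property. The paper instead keeps the full neighborhood $U$, observes that $h^n\colon U \to h^n(U)$ has only finitely many critical values, and uses the small island property repeatedly (on shrinking neighborhoods of a boundary point) to produce \emph{infinitely many} regular $h$-preimages of $z$ in $h^n(U)$, one of which must miss the finite critical-value set. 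Your argument is arguably cleaner at this step and makes explicit why the compact containment in Definition~\ref{d:small-island} matters. Conversely, your preliminary reduction via Lemma~\ref{lem:boundary} and the non-hyperbolic case split is not needed: the paper appeals directly to Montel/non-normality on $U$ to find an iterate meeting $\partial W$, which is shorter in the present setting where $\partial W \neq \emptyset$ is already assumed.
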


\begin{proof}
  Fix $z \in \C^*$ and let $U \Subset W$ be an open set intersecting $J_F(h)$. Define
    \[B^-(z):=\{x \in W : \exists n \in \N, h^n(x)=z \text{ and } (h^n)'(x) \neq 0 \}.\]
    It is enough to show that
    $U \cap B^-(z) \neq \emptyset$.

    As $U\cap J_F(h)\neq \emptyset$,
    by Montel's lemma
    there exists $n \in \N^*$ such that  $h^n(U) \cap \partial W \neq \emptyset$. 
    We choose the smallest such $n$.
    The map $h^n: U \to h^n(U)$ has only finitely many critical points. On the other hand, 
    by the small island property, there are infinitely many $y \in h^n(U)$ such that $h(y)=z$ and $h'(y) \neq 0$. 
    In particular, we can find one such $y$ such that there exists $x \in U$ with $h^n(x)=y$
    and $(h^n)'(x) \neq 0$. As such $x$ belongs 
    to $U\cap B^-(z)$, this concludes the proof.
\end{proof}

We can now prove the following version of Proposition \ref{p:mcmullen-misiurewicz}.

\begin{prop}\label{prop:existencem}
Let $W\subset \rs$ be an open set with $\partial W \neq \emptyset$ and $h:W\to\rs$  
a holomorphic map satisfying the small island property
and with at least one critical point.
   There exists $\lam_0 \in \C^*$ such that the
    family
    $(h_\lam:=\lam h)_{\lam \in \C^*}$
    has a local Misiurewicz bifurcation at $\lam_0$.
\end{prop}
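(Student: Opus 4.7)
The strategy is to produce a parameter $\la_*$ where some iterate of a critical point $c$ under $h_{\la_*}$ lies on $\partial W$, and then apply the Shooting Lemma (Lemma \ref{lem:shooting}) to send the next iterate exactly onto a nearby repelling periodic point. Since $h_\la = \la h$ has the same critical points as $h$ with the same local degrees, any critical point $c \in W \cap \C^*$ of $h$ gives a marked critical point $c_\la \equiv c$ of the family. I fix such a $c$; the hypothesis that $h$ has a critical point, combined with the small island property (forcing non-trivial dynamics away from $\{0,\infty\}$), ensures one can choose $c \in \C^*$.

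For the first step, pick $w \in \partial W \cap \C^*$. If $h(c) \in \C^*$, the choice $\la_* := w/h(c)$ gives $h_{\la_*}(c) = w \in \partial W$ with $n_* = 1$; if instead $h(c) \in \{0,\infty\}$, one iterates this argument using later iterates $h_\la^k(c)$, whose $\la$-dependence cannot remain constant in $\{0,\infty\}$ unless the postcritical set of $h$ is degenerate, in which case a direct case-by-case argument applies. By Remark \ref{r:natural}, $h_{\la_*}$ also has the small island property, so by Theorem \ref{th:repdense} the set $J_R(h_{\la_*}) = J_F(h_{\la_*}) \supset \partial W$ contains a repelling periodic point $x_*$. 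Since $x_*$ is hyperbolic, the implicit function theorem gives a holomorphic family $\la \mapsto \gamma_2(\la)$ of repelling periodic points of $h_\la$ defined near $\la_*$. Applying Lemma \ref{lem:shooting} with $\gamma_1 \equiv c$ and this $\gamma_2$ produces $\la_0 \in \C^*$ arbitrarily close to $\la_*$ with $h_{\la_0}^n(c) = \gamma_2(\la_0)$, where $n := n_*+1$.

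It remains to verify the four conditions of Definition \ref{d:local-misiurewicz} at $\la_0$. Condition (1) holds by construction. Condition (3) follows from Lemma \ref{l:unramified} applied to $h_{\la_0}$ (which has the small island property by Remark \ref{r:natural}) at the point $c \in \C^*$. For condition (4), the local degree of $h_\la^n$ at $c$ equals $\deg(h,c)\cdot\prod_{k=1}^{n-1}\deg(h, h_\la^k(c))$, and the extra factors exceed $1$ only on the discrete set of parameters where some $h_\la^k(c)$ enters $\crit(h)$. If $\la_0$ lies in this bad discrete set we repeat the construction with a different repelling periodic target $\gamma_2$ (infinitely many exist by Theorem \ref{th:repdense}), displacing $\la_0$ off the bad set.

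The delicate condition is (2), stating that $\la \mapsto h_\la^n(c)$ is not identically equal to $\gamma_2(\la)$ in a neighborhood of $\la_0$. Inspecting the proof of Lemma \ref{lem:shooting}, $\la_0$ lies in a Jordan domain $V \subset \D(\la_*, \eps)$ on which $G(\la) := h_\la^{n_*}(c)$ is a biholomorphism onto a small island $U' \Subset G(\D(\la_*,\eps))$ with $h(U') = D := \D(g(\la_*), r(g(\la_*)))$ and $h|_{U'}$ conformal, where $g(\la) := \la^{-1}\gamma_2(\la)$. Consequently $\la \mapsto h(G(\la))$ is a biholomorphism $V \to D$, and for $\la \in V$ the value $h_\la^n(c) = \la\, h(G(\la))$ covers, up to an error $O(\eps)$, the disk $\la_* \cdot D$ of fixed diameter $\approx 2|\la_*|\,r(g(\la_*))$. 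On the other hand, $\gamma_2(V)$ has diameter $O(\eps)$ by holomorphicity of $\gamma_2$. For $\eps$ small enough the fixed diameter dominates, so the holomorphic function $\phi(\la) := h_\la^n(c) - \gamma_2(\la)$ cannot vanish identically on $V$; being one-variable holomorphic and non-zero, its zero $\la_0$ is isolated, yielding (2). The main obstacle is precisely this size estimate: the small island property produces, independently of how small $V$ is, a fixed-diameter target disk for the next iterate of the critical orbit, defeating any potential identical agreement with $\gamma_2$.
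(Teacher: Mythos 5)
The overall strategy is the same as the paper's: land an iterate of a critical point of $h$ on $\partial W$ by choosing $\lam_0$, apply the Shooting Lemma to produce a Misiurewicz relation nearby, and verify conditions (1)--(4) of Definition~\ref{d:local-misiurewicz}. Your treatment of condition~(2) is in fact more explicit than the paper's (which leaves it implicit by reference to McMullen), and the quantitative comparison between the fixed-size image disk produced by the island property and the $O(\eps)$-size of $\gamma_2(V)$ is a valid and clean way to rule out the identity $h_\lam^n(c)\equiv \gamma_2(\lam)$.

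The gap is in condition~(4). You argue that the local degree of $h_\lam^n$ at $c$ exceeds $\deg(h,c)$ only on a discrete set $B$ of parameters, and that if the $\lam_0$ output by the Shooting Lemma happens to lie in $B$, one can ``repeat the construction with a different repelling periodic target, displacing $\lam_0$ off the bad set.'' This last step is not justified. The bad set $B$ depends only on the intermediate iterates of $c$, not on the target $\gamma_2$, and both $B$ and the set of Misiurewicz parameters produced by the Shooting Lemma are countable sets that may accumulate at the starting parameter $\lam_*$; nothing in the argument shows that some choice of repelling target yields a solution $\lam_0\notin B$. The paper resolves this differently and constructively: it applies the Shooting Lemma a second time, targeting not an arbitrary repelling periodic point but a preimage $b'_\lam$ of a second repelling point $b_\lam$, chosen (via the small island property and Lemma~\ref{lem:boundary}) to accumulate on the first repelling point $a_\lam$ and to lie inside a linearization domain $U$ containing no critical points. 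Because $b'$ is obtained by composing univalent inverse branches, the forward orbit from $b'_\lam$ to $b_\lam$ avoids $\crit(h)$, so by continuity the intermediate iterates of $c$ avoid $\crit(h)$ for all $\lam$ near the new Misiurewicz parameter, and the local degree of the relevant iterate at $c$ is constantly $\deg(h,c)$. Your argument would need to be replaced by (or supplemented with) something of this kind; as written, the ``re-target and hope'' step is a genuine gap.

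A smaller issue: your first step, where you handle the case $h(c)\in\{0,\infty\}$ by asserting that later iterates ``cannot remain constant in $\{0,\infty\}$ unless the postcritical set is degenerate, in which case a direct case-by-case argument applies,'' is vague. The paper simply normalizes so that $\lam_0 h(c)\in\partial W$ (allowed since $\lam_0$ is free), which requires $h(c)\in\C^*$; if you want to allow $h(c)\in\{0,\infty\}$ you should give the actual case analysis rather than gesture at one.
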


\begin{proof}
Take $\lambda_0\in \mathbb C^*$. We allow ourselves
to modify $\lambda_0$ in what follows.
We let $\gamma_1 (\lambda)$ be the motion of a critical point near $\lam_0$, and $\gamma_2 (\lambda)$ 
the motion of a repelling periodic cycle
(which exists by Theorem \ref{th:islandp} and the implicit function theorem).
Observe that, as
the maps $h_\la$ have the form $h_\la = \la h$, the map $\gamma_1$ is in fact constant;
we will therefore simply write $\gamma_1$ instead of $\gamma_1(\la)$.
As we are allowed to modify $\lam_0$, we can also assume that $\lam_0 h(\gamma_1) \in \partial W$.
Hence, we are in the assumptions of Lemma \ref{lem:shooting}.
That lemma shows that, up to slighly modifying $\lam_0$, we have a Misiurewicz relation at $\lam_0$.
As in
\cite{mcmullen}, since we are allowed to slightly perturb the starting parameter (and the critical point is automatically unramified by Lemma \ref{l:unramified}),
the first three conditions in Definition \ref{d:local-misiurewicz}
can be achieved from the above construction.
Hence, we
only have to show that
(up to a further small perturbation)
we can get the fourth condition in Definition \ref{d:local-misiurewicz}.
Let us recall that, as in \cite{mcmullen}, this property may fail if the local degree
of the critical point at $\lam_0$ is larger than
at nearby parameters $\lam'\neq \lam_0$.

We assume for simplicity that the integer $n$
in Definition \ref{d:local-misiurewicz} is equal to $1$.
Following \cite{mcmullen}, we denote by $a_\lam$ the 
holomorphic 
motion of the repelling point giving the Misiurewicz relation at $\lam_0$. 
We denote by $U$ a small linearization domain for $a_{\lam}$, for all
$\lam$ in 
given small
neighbourhood
of $\lam_0$. We will always work with $\lam$ in this neighbourhood. We can also assume that the 
local degree of $c_\lambda$ is constant outside of $\lambda_0$.
We let $b_\lambda$
be the motion of a second repelling point, which stays in $U$ for all $\lambda$ in consideration.
By the small island property, there exist preimages
of $b_\lambda$ accumulating on
$a_\lambda$. We denote by $b'_\lambda$ one of these preimages.
Applying again Lemma \ref{lem:shooting}, we can find $\lam'$
close to $\lam_0$
such that $f(c_{\lambda'})= b'_{\lambda'}$.
The local degree of $f$ is constant near $\lambda'$, and since there are no critical points in $U$, we see that the same is true for the iterate of $f$
 mapping $c_{\lam'}$ to $b'_{\lambda'}$.
The proof is complete.
\end{proof}

\begin{proof}[Proof of Proposition \ref{p:big-hyperbolic-island}]
Fix $\eps>0$ and let $d \geq 2$ be the local degree at a given critical point of $h$.
For any $c \in \C$, denote
    $g_c(z):=z^d+c$. 
    By \cite{Shishikura98hausdorff}, there exists $c \in \mathbf{M_d}$,  $N \in \N$, 
    simply connected domains $V, V_1, \ldots, V_N \subset \C$,
    and integers $m_1, \dots, m_N$
    such that:
\begin{enumerate}
    \item for all $1 \leq i <j\leq N$,
    we have $V_i, V_j \Subset V$ and $\overline{V_i} \cap \overline{V_j} = \emptyset$;
    \item for all $1 \leq i \leq N$, 
    $g_{c}^{m_i}: V_i \to V$ is a conformal isomorphism;
    \item if $A:=\max_{1 \leq i \leq N} \sup_{z \in V_i} |(g_{c}^{m_i})'(z)|$, then 
    $$2-\frac{\eps}{2} \leq \frac{\log N}{\log A}.$$
\end{enumerate}

    By Proposition \ref{prop:existencem}, there exists a local Misiurewicz bifurcation of degree $d \geq 2$
    at some $\lam_0\in \C^*$ in the family $(h_\la:=\la h)_{\la \in \C^*}$.
    By Theorem \ref{th:mcmu}, there exists a sequence $\lam_n \in \C^*$ and, for every $n$, a $K_n$-quasiconformal homeomorphisms $\phi_n: V \to \phi_n(V)$ with
    $K_n \to 1$ as $n\to \infty$
    which conjugates $g_c$ to a polynomial-like restriction of $h_{\la_n}$. Then,
    for $n$ large enough, the open sets  $U:=\phi_n(V)$ and $U_i:=\phi_n(V_i)$
    satisfy the 
    conditions in the statement.
\end{proof}

\subsection{Proof of Theorem \ref{th:maxdim}}

By Theorem \ref{th:islandp}, we can apply Proposition \ref{p:big-hyperbolic-island} 
to the 
horn map $h_f$ 
associated to the semi-parabolic fixed point of
$f$
as in Definition \ref{d:hf}. 
The following proposition is a rewriting of that statement in terms of the 
Hénon-Lavaurs maps $\mathcal L_\alpha$. Observe that the multiplicative constant in Proposition \ref{p:big-hyperbolic-island}
translates to the phase $\alpha$ in the statement below.

\begin{prop}\label{p:ifs}
For every $\epsilon >0$
there exists $\alpha \in \C$,
disjoint open sets $U,U_1, \ldots, U_N \subset \Sigma$,
and integers $m_1, \ldots, m_N$ 
    such that
    \begin{enumerate}
    \item 
    for every
    $1\leq i<j \leq N$, we have $U_i,U_j\Subset U$ 
    (in the topology induced by $\Sigma$)
    and $\overline{U_i} \cap \overline{U_j} =\emptyset$;
    \item
     for every
    $1\leq i \leq N$,
    the map
    $\lcal_\alpha^{m_i}: U_i \to U$ is a conformal isomorphism;
\item if $c:=\max_{1 \leq i \leq N} \sup_{z \in U_i} |(\mathcal L_{\alpha}^{m_i})'(z)|$, then 
    \begin{equation}\label{e:Nc}
    2-\eps \leq \frac{\log N}{\log c}.
    \end{equation}
    \end{enumerate}
\end{prop}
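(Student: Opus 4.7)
The plan is to apply Proposition~\ref{p:big-hyperbolic-island} to the horn map $h = h_f$ and then transfer the resulting conformal IFS from the cylinder $\C^*$ to $\Sigma$ via the Fatou parametrization $\psi^o$. The hypotheses of Proposition~\ref{p:big-hyperbolic-island} are satisfied: $h_f$ has the small island property by Theorem~\ref{th:islandp}; its domain $W \subset \rs$ is a proper open set containing neighborhoods of $0$ and $\infty$, so $\partial W \neq \emptyset$; and $h_f$ admits a critical point by \cite{DL15stability}, as pointed out above Proposition~\ref{p:big-hyperbolic-island}. Hence for any $\eps > 0$, this yields $\lambda_1 \in \C^*$, $N \in \N$, simply connected domains $V_1, \dots, V_N \Subset V \subset W \cap \C^*$ with pairwise disjoint closures, and positive integers $m_i$ such that each $h_{\lambda_1}^{m_i} \colon V_i \to V$ is a conformal isomorphism and $2 - \eps \leq \log N / \log c_{\mathrm{cyl}}$, where $c_{\mathrm{cyl}}$ denotes the analogous supremum of derivatives on the cylinder.

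Next, I would lift the picture from the cylinder, identified with $\C/\Z$ via $z \mapsto e^{2\pi i z}$, to its universal cover $\C$. Multiplication by $\lambda_1$ on the cylinder corresponds to translation by $\alpha$ on $\C$ for any $\alpha$ with $e^{2\pi i \alpha} = \lambda_1$, so $h_{\lambda_1}$ is the quotient of $H_{f,\alpha} := \tau_\alpha \circ H_f$. By Theorem~\ref{th:mcmu}, the IFS from Proposition~\ref{p:big-hyperbolic-island} is a quasiconformal copy of one from a polynomial-like restriction; in particular, $h_{\lambda_1} \colon V^{(1)} \to V$ is polynomial-like for some $V^{(1)} \Subset V$ containing all the $V_i$. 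Fix a simply connected lift $\tilde V \subset \C$ of $V$ and let $\tilde V^{(1)} \subset \tilde V$ be the corresponding lift of $V^{(1)}$. A priori $H_{f,\alpha}(\tilde V^{(1)}) = \tilde V + k$ for some integer $k$; replacing $\alpha$ by $\alpha - k$ leaves $e^{2\pi i \alpha} = \lambda_1$ unchanged and ensures $H_{f,\alpha}(\tilde V^{(1)}) = \tilde V$. With this choice, the iterated preimages of $\tilde V$ inside $\tilde V^{(1)}$ give lifts $\tilde V_i \Subset \tilde V$ of $V_i$ satisfying $H_{f,\alpha}^{m_i}(\tilde V_i) = \tilde V$ exactly (no shift), and the inclusion and pairwise disjointness properties from the cylinder transfer to the $\tilde V_i$ inside $\tilde V$.

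Then I push the IFS to $\Sigma$ via $\psi^o$. Since $\psi^o$ is an immersion, after shrinking $\tilde V$ if needed, its restriction to a neighborhood of $\overline{\tilde V}$ is a biholomorphism onto its image. Set $U := \psi^o(\tilde V)$ and $U_i := \psi^o(\tilde V_i)$; the disjointness and compact inclusion properties (1) follow immediately. The semi-conjugacy $\mathcal{L}_\alpha \circ \psi^o = \psi^o \circ H_{f,\alpha}$ (direct from $\mathcal{L}_\alpha = \psi^o \circ \tau_\alpha \circ \phi^\iota$ and $H_f = \phi^\iota \circ \psi^o$) iterates to $\mathcal{L}_\alpha^{m_i} \circ \psi^o = \psi^o \circ H_{f,\alpha}^{m_i}$, which combined with $H_{f,\alpha}^{m_i}(\tilde V_i) = \tilde V$ yields that $\mathcal{L}_\alpha^{m_i} \colon U_i \to U$ is a conformal isomorphism, proving~(2).

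Finally, for the derivative estimate~(3), differentiating the semi-conjugacy gives $|(\mathcal{L}_\alpha^{m_i})'(\psi^o(z))| = |(H_{f,\alpha}^{m_i})'(z)| \cdot |(\psi^o)'(H_{f,\alpha}^{m_i}(z))| / |(\psi^o)'(z)|$. On the compact set $\overline{\tilde V}$, $|(\psi^o)'|$ is bounded above and away from $0$, and the exponential covering $\C \to \C^*$ relates $|(H_{f,\alpha}^{m_i})'|$ on $\tilde V_i$ to $|(h_{\lambda_1}^{m_i})'|$ on $V_i$ up to a bounded multiplicative factor. Hence the constant $c$ from~(3) and $c_{\mathrm{cyl}}$ differ by at most a multiplicative constant $C>0$ independent of $N$, $i$, and $m_i$. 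Since Proposition~\ref{p:big-hyperbolic-island} can be applied with $\eps$ arbitrarily small (forcing $\log N, \log c_{\mathrm{cyl}} \to \infty$), the additive term $\log C$ becomes negligible and the required inequality follows. The main technical delicacy is the integer shift $k$ arising when lifting the polynomial-like structure to $\C$, which is precisely what dictates the choice of representative $\alpha$ for $\lambda_1$ modulo $\Z$.
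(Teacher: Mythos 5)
Your proof is correct and follows the same route the paper intends: apply Proposition~\ref{p:big-hyperbolic-island} to $h_f$, lift through the exponential covering, and push forward by $\psi^o$ using the semi-conjugacy $\mathcal{L}_\alpha\circ\psi^o=\psi^o\circ\tau_\alpha\circ H_f$. The paper itself offers essentially no proof beyond the remark that Proposition~\ref{p:ifs} is ``a rewriting'' of Proposition~\ref{p:big-hyperbolic-island} with the multiplicative constant $\lambda_1$ becoming the phase $\alpha$, so your write-up fills in details (the integer ambiguity of $\alpha$, resolved via the polynomial-like restriction so that all branches return to the \emph{same} lift $\tilde V$; the bounded derivative distortion across $\exp$ and $\psi^o$) that are left implicit there. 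One small spot worth tightening: the claim that $\log N$ and $\log c_{\mathrm{cyl}}\to\infty$ as $\eps\to 0$ is asserted without justification; it is true (and standard -- e.g.\ one can replace the CIFS by its own compositions, which multiply both $\log N$ and $\log c_{\mathrm{cyl}}$ by roughly the same factor while preserving their ratio, so $\log c_{\mathrm{cyl}}$ can be made arbitrarily large while keeping $\log N/\log c_{\mathrm{cyl}}\geq 2-\eps'$), but you should say a word, since it is exactly what makes the bounded multiplicative constant $C$ harmless.
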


In the proposition above, up to taking preimages,
we may assume that $U$ and all
 the $U_i$'s have 
 small diameter, and that they
 are transverse to the strong stable foliation in $\mathcal B$.
  In particular, we may choose a
  small open set $V\subset \C^2$ with $V\cap \Sigma =U$ and a coordinate system $(x,y) \in \D^2$ for $V$
  in which $U=\D \times \{0\} $ and the strongly stable foliation is the vertical foliation
  given by $x=c$, for $c\in \D$.
  We allow ourselves to reduce the size of $V$
  (in the transversal direction to $\Sigma$)
  in the following, as well as $U$ and the $U_i$'s. We only work on $V$
  (resp. $\D^2$)
  in the following. By a slight abuse of notation, we will still denote by $U_i\times \{0\}$
  the images of the $U_i$ in the chart $(x,y)$. Moreover,
  given two vertical subsets $A,B\subset \D\times \D$,
  we will write $A\Subset B$
  whenever $A\cap \{y=y_0\} \Subset B \cap \{y=y_0\}$ 
  for every $y_0 \in \D$. 
  For every $1\leq i \leq N$,
  we also denote $V_i := U_i \times \D$ and by
  $L_i$ the reading of $\mathcal L_\alpha^{m_i}$
  in the coordinates $(x,y)\in \D^2$. 
  We also 
  denote by
  $L$ the holomorphic map
  on $\cup V_i$
  which is equal to $L_i$ on $V_i$.
  Observe that, in particular, 
  with the above notations 
  we have 
  \begin{equation}
  \label{e:incl-L}
  L_i^{-1} (V_j) \Subset V_i
  \mbox{ for all } i,j  \quad 
  \mbox{ and }
  \quad L^{-1} (\cup V_i) \Subset \cup V_i.
  \end{equation}

\medskip

    By Theorem \ref{t:BSU}, given an  $\alpha$-sequence $(\eps_j,n_j)$,  we have
    $f_{\eps_j}^{n_j} \to \lcal_\alpha$
    locally uniformly in $\mathcal B$.
    For simplicity of notation, we will fix an $\alpha$-sequence of the form $(\eps_n,n)$, so that
    the convergence takes the form
    \begin{equation}\label{e:lav-use}
    f_{\eps_n}^{n} \to \lcal_\alpha.
    \end{equation}
    For every $n$ and $i$, we denote by $g_{n,i}$
    the reading in the coordinates $(x,y)\in \D^2$ of the restriction of $f_{\epsilon_n}^{n m_i}$
    to $V_i$. We also denote by $g_n$ the holomorphic map which is equal to $g_{n_i}$ on $V_i$.
    Then, 
    in the coordinates $(x,y)$,
    for every $n$ sufficiently large, $g_n$ can be seen as 
    a horizontal-like map 
    \cite{DNS08dynamics}
     from a vertical subset to a horizontal subset of $\D\times \D$.
Up to slightly reducing the $U_i$ and 
$V$,
this vertical subset is actually
a subset of $\cup V_i$.
The horizontal subset consists of
$N$ small horizontal sets, contained in a small neighbourhood of $U=\{0\}\times \D$.

\medskip

It follows from \eqref{e:incl-L} and \eqref{e:lav-use}
that, for every $n$ sufficiently large and up to slightly reducing the $U_i$'s, we have
\begin{equation}
  \label{e:incl-g}
  g_{n,i}^{-1} (V_j) \Subset V_i
  \mbox{ for all } i,j  \quad 
  \mbox{ and }
  \quad g_n^{-1} (\cup V_i) \Subset \cup V_i.
  \end{equation}

Let us denote by $K^+_n$ the set of 
points in $\D^2$ whose orbit
under  $g_n$
never leaves $\D^2$
(which corresponds to point never leaving $V$ under appropriate iterates of $f_{\epsilon_n}^{n}$).
It follows from 
\eqref{e:incl-g} that,
for all 
$n$
sufficiently large,
$K^+_n$ is a collection of vertical graphs in $\D  \times\D$, which are
stable manifolds in $\D^2$.
Similarly, we can also
consider the
horizontal 
set $K^-_n$ which is a union of
unstable 
manifolds in $\D^2$.
The intersection $C_n := K_n^+ \cap K_n^-$ is
a Cantor set, on which the action of $f_{\epsilon_n}^n$
is uniformly hyperbolic. 
We denote by $\mu_n$ the measure of maximal entropy $\log N$ on $C_n$.
This measure admits two Lyapunov exponents
$\chi_n^-< 0< \chi_n^+$.
The exponent $\chi^+_n$ can be estimated
by means of the transversal contraction of the vertical sets $V_i$ under the inverse iteration of $g_n$. In particular, we deduce
the following upper bound
from \eqref{e:lav-use}
and the definition of $c$ in Proposition \ref{p:ifs}:

\begin{lem}\label{l:chin}
  Fix $\delta>1$ and let $c$ be as in Proposition \ref{p:ifs}.
  Then, for all $n \in \N$ large enough, we have
    $$\chi_n^+<    \delta \log c.$$
\end{lem}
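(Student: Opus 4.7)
My plan is to bound $\chi_n^+$ directly by the supremum of $\log \|Dg_n\|$ on the support of $\mu_n$, then to exploit the uniform convergence $g_n \to L$ together with an explicit description of $DL$ on each $V_i$. The whole argument splits naturally into three steps.

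First (the ergodic-theoretic step), Oseledec's theorem applied to the ergodic hyperbolic measure $\mu_n$, combined with Birkhoff's ergodic theorem, gives
\[
\chi_n^+ = \int_{C_n} \log \bigl\|Dg_n(x)|_{E^u_n(x)}\bigr\|\, d\mu_n(x) \le \sup_{x \in C_n} \log \|Dg_n(x)\|,
\]
where $E^u_n$ denotes the $1$-dimensional unstable subspace. Since $C_n \subset \bigcup_i V_i$ and $g_n \equiv g_{n,i}$ on $V_i$, it is enough to estimate $\max_i \sup_{V_i} \|Dg_{n,i}\|$.

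Second (the key geometric observation), $\|DL_i\| \le c$ on $V_i$. Indeed, since $\lcal_\alpha = \psi^o \circ \tau_\alpha \circ \phi^\iota$ factors through $\phi^\iota$, it is constant along the strongly stable leaves $\{x = \mathrm{const}\}$, and its image lies in $\Sigma = \{y=0\}$. In the coordinates $(x,y) \in \D^2$ we may therefore write $L_i(x,y) = (\tilde L_i(x), 0)$, where $\tilde L_i$ is the reading of $\lcal_\alpha^{m_i}|_\Sigma$ in the $x$-variable. Hence
\[
DL_i(x,y) = \begin{pmatrix} \tilde L_i'(x) & 0 \\ 0 & 0 \end{pmatrix}, \qquad \|DL_i(x,y)\| = |\tilde L_i'(x)| \le c,
\]
the last inequality following from the definition of $c$ in Proposition \ref{p:ifs} together with the fact that $V_i = U_i \times \D$.

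Third, I would transfer the bound to $g_n$. The argument used by the authors to derive \eqref{e:incl-g} from \eqref{e:incl-L} and \eqref{e:lav-use} already gives $g_{n,i} \to L_i$ uniformly on $V_i$. Since the maps are holomorphic and the convergence can be set up on a slightly larger $V_i' \Supset V_i$, Cauchy estimates upgrade it to $Dg_{n,i} \to DL_i$ uniformly on $V_i$, so $\sup_{V_i}\|Dg_{n,i}\| \le c + o(1)$. Combining the three steps and noting that $c > 1$ (each $\lcal_\alpha^{m_i}: U_i \to U$ is a conformal isomorphism with $U_i \Subset U$, hence strictly expanding by the Schwarz lemma), I conclude $\chi_n^+ \le \log c + o(1) < \delta \log c$ for $n$ large enough. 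The only mildly delicate point I foresee is this third step: to extract $C^1$-convergence from the $C^0$-convergence of Theorem \ref{t:BSU}, the domains of convergence must be slightly enlarged; this is standard but requires a small amount of care in the setup.
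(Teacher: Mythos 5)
Your argument is correct and coincides with the proof the paper intends (the authors merely say the lemma is "deduced from \eqref{e:lav-use} and the definition of $c$" without writing it out). The three-step structure — $\chi_n^+=\int \log\|Dg_n|_{E^u_n}\|\,d\mu_n\le \sup_{C_n}\log\|Dg_n\|$ by Oseledec--Birkhoff; $\|DL_i\|\le c$ because in the chart $(x,y)$ the map $L_i(x,y)=(\tilde L_i(x),0)$ depends only on $x$ (as $\phi^\iota$ is constant on the stable leaves $\{x=\mathrm{const}\}$) and has image in $\Sigma=\{y=0\}$; then Cauchy estimates upgrade the locally uniform $C^0$-convergence of Theorem \ref{t:BSU} on a slightly larger domain to $C^1$-convergence — is exactly the chain the paper has in mind. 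A small polish: for $c>1$ the area argument is the cleanest (if $|(\mathcal L_\alpha^{m_i})'|\le 1$ on all of $U_i$ then $\mathrm{area}(U)\le\mathrm{area}(U_i)<\mathrm{area}(U)$, a contradiction); your Schwarz-lemma remark also works once phrased via the attracting fixed point of the inverse branch, but as stated it only gives expansion of the hyperbolic metric, not pointwise Euclidean expansion. Either way, $c>1$ is also forced by inequality (3) of Proposition \ref{p:ifs} itself.
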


Let $W^u(p)$ denote a generic unstable manifold of some $p \in C_n$,
and let $\nu_n$ denote the conditional measure of $\mu_n$ on $W^u(p)$.
It follows from \cite{LY2} 
that the Hausdorff dimension
of  $\nu_n$ is equal to $(\log N) / \chi_n^+$. 
As a consequence, we deduce from
Lemma \ref{l:chin} that, for every $\delta>1$,
the Hausdorff dimension of $\nu_n$ is larger than $\delta^{-1} \log N/c$ for every $n$ sufficiently large.

\medskip

Recall that through every $p \in C_n$ there is 
a stable manifold $W^s(p)$ which is a vertical graph in $\D\times \D$ and an unstable manifold $W^u(p)$ which is a horizontal graph in $\D\times \D$; in particular, $W^s(p)$ intersects every horizontal graph at exactly one point.
In particular, for every $y_0 \in \D$ there is a well-defined holonomy map
$\phi_{y_0}: \mathrm{supp}\, \nu_n \to \D$,
associated to the stable foliation of $g_n$
in $\D^2$, between the transversals $W^u(p)$ and the horizontal disk
$\{y=y_0\}$. By \cite{lyubichcft}
(see also \cite{LP21structure}),
the map $\phi_{y_0}$ is Lipschitz continuous. 
It follows 
that $K_n^+ = \bigcup_{y \in \D} \phi_y(\mathrm{supp}\, \nu_n)$
has Hausdorff dimension at least $2+  \delta^{-1} (2-\epsilon)$.
By 
the relation \eqref{e:Nc} between $N$ and $c$, this implies that we have
\[\dim_H (K^+_n) \geq 2 + \delta^{-1} (2-\epsilon)\]
for every $n$ sufficiently large.
Up to taking $\delta$ sufficiently close to $1$
and $n$ sufficiently large, we deduce the lower bound
$\dim_H (K^+_n) \geq 4-2\epsilon$. As the local coordinates $(x,y)$
are conformal
and the image of $K^+_n$ is contained in $J^+ (f_{\epsilon_n}^n)= J^+ (f_{\epsilon_n})$, this concludes the proof of Theorem \ref{th:maxdim}.

 \bibliographystyle{alpha} 
\bibliography{biblio-lavhen}

\end{document}